\newcommand{\ol}[1]{\overline{{#1}}}
\newcommand{\ra}{\rightarrow}
\newcommand{\gde}{{\underline{\underline{\textrm{gd}}}\ }}
\newcommand{\E}{{\underline{\underline{E}}}}
\newcommand{\F}{{\mathcal{F}}}
\def\coloneqq{\mathrel{\mathop\mathchar"303A}\mkern-1.2mu=}
\newtheorem{thm}{Theorem}[section]
\newtheorem{cor}[thm]{Corollary}
\newtheorem{lem}[thm]{Lemma}
\newtheorem{prop}[thm]{Proposition}
\newtheorem{conv}[thm]{Convention}
\theoremstyle{definition}
\newtheorem{defn}[thm]{Definition}
\newtheorem{nt}[thm]{Notation}
\theoremstyle{remark}
\newtheorem{rem}[thm]{Remark}
\newtheorem{ex}[thm]{Example}
\newcommand{\gen}[1]{\left\langle#1\right\rangle}
\newcommand{\gp}[2]{\gen{#1\mid #2}}
\DeclareMathOperator{\Comm}{Comm}
\DeclareMathOperator{\stab}{Stab}
\DeclareMathOperator{\uE}{\underline{E}}
\newcommand{\Z }{\mathbb Z}
\def\coloneq{\mathrel{\mathop\mathchar"303A}\mkern-1.2mu=}
\begin{document}

\title[Bredon homology of Artin groups of dihedral type]{Bredon homology of Artin groups of dihedral type}

\address{Departamento de  Álgebra, Geometría y Topología, Universidad Complutense de
Madrid and Instituto de Ciencias Matemáticas, CSIC-UAM-UC3M-UCM}
\author{Yago Antol\'{i}n}
\email[Yago Antol\'{i}n]{yago.anpi@gmail.com}

\address{Universidad de Sevilla,
Departmento de Geometr\'{i}a y Topolog\'{i}a,
c/ Tarfia, s/n 41080, Seville (Spain)}
\author{Ram\'on Flores}
\email[Ram\'on Flores]{ramonjflores@us.es}

\footnotetext{
{\bf Mathematics Subject Classification (2020)}: Primary: 19E20;  Secondary: 19A31, 19B28, 20F36.


The second author was supported by grant PID2020-117971GB-C21 of the Spanish Ministery of Science and Innovation, and grant FQM-213 of the Junta de Andaluc\'{\i}a.}

\thanks{}

\begin{abstract}

For Artin groups of dihedral type, we compute the Bredon homology groups of the classifying space for the family of virtually cyclic subgroups with coefficients in the {\it K}-theory of a group ring.

\end{abstract}

\keywords{}

\subjclass[2010]{}

\maketitle


\section{Introduction}

This paper is part of a program aimed to understand equivariant and $K$-theoretic invariants for Artin groups.
These groups are ubiquitious in geometric group theory, as they comprehend many different subfamilies of groups (braid groups, right-angled, spherical type, extra-large, free groups...), provide examples and counterexamples for many interesting phenomena, and at the same time they are not well understood from a global point of view: it is not even known if every Artin group is torsion-free. See \cite{Par09} for a good survey. In this context, Azzali  \emph{et al.} have recently computed explicitly both sides of Baum-Connes for pure braid groups \cite{ABGRW}; in joint work of the second author with J. Gonz\'alez-Meneses \cite{FlGo18} it is computed the minimal dimension of a model of $\E G$ for braid groups; J. and virtually-cyclic dimensions of mapping class groups (which include in particular certain Artin groups) have been recently investigated by Aramayona \emph{et al.}  (\cite{AJT18}, \cite{ArMa}) and by Petrosyan and Nucinkis \cite{NuPe18}.


In the present paper we consider the case of Artin groups of dihedral type, denoted by $A_n$, which are the groups defined by
$$A_n=\gp{a,b}{\mathrm{prod}(a,b;n)=\mathrm{prod}(b,a;n)}.$$
Here  $\mathrm{prod}(x,y;n)$ denotes the word of length $n$ that alternates $x$ and $y$ and starts with $x$.
 These groups   are one-relator and torsion-free, they  have small geometric dimension and its group-theoretic structure is quite well understood. These features makes them strongly appropriate for computations, and in fact have been recently studied from different angles, as growth series \cite{MaMa06}, systems of equations \cite{CHR20} or geodesics \cite{Wal09}.

We compute here Bredon homology groups of the classifying space of these groups with respect to the family of virtually cyclic groups. Bredon homology was first described by Glen Bredon in the sixties \cite{Bre67}, and it is a $G$-equivariant homology theory that takes into account the action over the target space of the subgroups of $G$ that belong to a predefined family. Since their appearance, Bredon homology groups have played a prominent role in Homotopy Theory and Group Theory, particularly in relation with finiteness conditions, dimension theory for groups and classifying spaces, and in the framework of the Isomorphism Conjectures. In fact, our choice of coefficients in the $K$-theory of a ring $G$ has been done with an eye in possible applications of the computations to the left-hand side of Farrell-Jones Conjecture (see below), although we believe that the methods of the present paper may be useful in a more general context, switching the $K$-theoretic coefficients to a general coefficient module.

Our main technical result is the following:


\noindent\textbf{Theorem \ref{Thm:Bredon}.} Let $A_n$ be an Artin group of dihedral type and $R$ a ring, $R$ a ring, $K_q(R[-])$ the covariant module over the orbit category $O_{\mathcal F}(G)$ that sends every (left) coset  $G/H$ to the $K$-theory group $K_q(R[H])$. Then we have the following:
\begin{enumerate}
  \setlength{\itemindent}{-2em}
\item $H_i^{vc}(\E A_n,K_q (R[-]))=\{0\}$ for $i\geq 4$.
\item $H_3^{vc}(\E A_n,K_q (R[-]))=\begin{cases}\bigoplus_{[H]\neq [Z(A_n)]} K_q(R) & \text{$n$ odd}\\
\ker g_2^2 & \text{$n$ even.}
\end{cases}$

\item $H_2^{vc}(\E A_n,K_q (R[-]))= \ker g_2^1$.

\item $H_1^{vc}(\E A_n,K_q (R[-]))= \emph{coker } g_2^1 = \begin{cases}(\bigoplus_{[H]\neq [Z(A_n)]}N_q^{[H]})\oplus T_1(K_q(R))\oplus T_2(K_q(R)) & \text{$n$ odd}\\
\bigoplus_{[H]} N_q^{[H]}
  & \text{$n$ even.}
\end{cases}$%
\item $H_0^{vc}(\E A_n,K_q (R[-]))= \emph{ coker } g_2^0= \begin{cases}(\bigoplus_{[H]\neq [Z(A_n)]} N_q^{[H]})\oplus K_q(R)\oplus\overline{C}(K_q(R)) & \text{$n$ odd}\\
(\bigoplus_{[H]} N_q^{[H]})\oplus K_q(R)
  & \text{$n$ even.}
  \end{cases}$%
\end{enumerate}


Here $g^j_i$ stands for a homomorphism in the Degrijse-Petrosyan exact sequence  {\cite[Section 7]{DP}}, $[H]$ for the commensurability class of a {non-trivial} cyclic subgroup $H$, and $N_q^{[H]}$, $T_i(K_q(R))$ and $\overline{C}(K_q(R))$ for groups that depend on $H$ and the Bass-Heller-Swan decomposition of $K_q(R[\Z])$. See Section \ref{Sect:BredonArtin} for details.



The main tool used in the proof of Theorem \ref{Thm:Bredon} is an exact sequence in Bredon homology \cite{DP}, which is in turn the Mayer-Vietoris sequence associated to the push-out that defines the L\"{u}ck-Weiermann model for $\E G$ (\cite{LW}, see also Section \ref{Sect:Prelim} below). The knowledge about the group-theoretic structure of the groups includes a complete understanding of the commensurators, which is crucial in the computations. Using the theorem, we are able to describe with precision the Bredon homology of $A_n$ with coefficients in the $K$-theory of several rings, both regular and non-regular (see Section \ref{Sect:concrete}). 

Next we describe the implications of our work in relation with the Farrell-Jones Conjecture. Recall that given a group $G$, a ring $R$ and $n\in\mathbb{Z}$, Farrell-Jones stated in \cite{FaJo93} the existence of an assembly map:

$$H_n^G(\E G,\mathbf{K} (R))\rightarrow K_n(RG),$$
where $\E G$ is the classifying space of $G$ with respect to the family of virtually cyclic groups,
$H_*^G(-,\mathbf{K} (R))$ is the $G$-homology theory defined in Section 1 of \cite{FaJo93} and $K_n(RG)$ stands for the $n$-th group of algebraic $K$-theory of the group ring $RG$.
The Farrell-Jones conjecture predicts that the assembly map is an isomorphism.
The conjecture has been verified for a big family of groups (see \cite{LR05} for an excellent survey), and no counterexample has been found so far.
The philosophy in this context is, for a group for which the assembly map is known to be an isomorphism, to perform computations in the topological side in order to extract information about the algebraic $K$-theory of the group ring. It is remarkable that the latter are difficult to compute, and at the same time encode fundamental invariants of manifolds, including obstructions to the existence of cobordisms and information about groups of pseudoisotopies. Explicit calculations in this context can be found in \cite{BuSa16}, \cite{DQR11}, \cite{KLL21} or \cite{SaVe18}, for example.

The left-hand side of the conjecture can be approached by means of a $G$-equivariant version of the Atiyah-Hirzebruch spectral sequence, which  converges to the Farrell-Jones $K$-homology, and whose $E_2$-page is  the Bredon homology of the classifying space $\E G$ with coefficients in the $K$-theory of the group rings of the virtually cyclic subgroups of $G$; example of such calculations in the Farrell-Jones framework can be found in \cite{BJV14} \cite{LuRo14}. In this setting, Theorem \ref{Thm:Bredon} and the examples of Section \ref{Sect:concrete} can be interpreted as an explicit computation of such $E_2$-page, in the case of Artin groups of dihedral type. We remark that in the case of $R$ regular, the left-hand side of the conjecture for these Artin groups can be deduced from \cite[Lemma 16.12]{Luc21}, using previously a splitting result of L\"{u}ck-Steimle \cite{LS16} (see the end of Section \ref{Sect:concrete} for details), so our results provide new information in this context for a non-regular $R$; in this sense, we expect that Examples 5.3-5.5 will be useful. It is worth to point out that the Atiyah-Hirzebruch spectral sequence collapses at most at the $E_5$-page in this context, so an (at least partial) computation of the differentials may not be completely out of sight.

We finish by pointing out that Artin groups of dihedral type are free-by-cyclic, and the Farrell-Jones conjecture has been recently verified for this class of groups \cite{BFW21}. Hence, all the computations in the left-hand side can be read in terms of algebraic $K$-theory of $RG$.


\textbf{Summary of contents}. In Section \ref{Sect:Prelim} we recall the main definitions about classifying spaces for families and Bredon homology, with special emphasis in the L\"{u}ck-Weiermann model and its associated Mayer-Vietoris sequence. In Section \ref{Sect:Artin} the main properties of Artin groups that will be used on the rest of the paper are studied. Then, in Section \ref{Sect:BredonArtin}, we carefully analyze the homomorphisms in the exact sequence and prove Theorem \ref{Thm:Bredon}; and in final Section \ref{Sect:concrete} we apply our results to describe different concrete examples.

\section{Preliminaries}
\label{Sect:Prelim}

In this section we state some notions of $G$-equivariant homotopy that will frequently appear in the rest of the paper.
The exposition will be sketchy and very focused to our goals; the reader interested in a thorough treatment of the subject is referred to \cite{TDieck} for the theory of $G$-$CW$-complexes and actions on them, to \cite{Luc05} for the theory of classifying spaces and to the first part of \cite{MiVa03} for Bredon homology.

\subsection{Classifying spaces for families}
\label{Sect:classify}
In this section we will briefly recall the notion of classifying space for a family of subgroups, which is the central object in the topological side of the Isomorphism Conjectures. Then we will review L\"{u}ck-Weiermann model and the definition of commensurator, which will be crucial in our computations.

\begin{defn}

Let $G$ be a discrete group, and $\mathcal{F}$ be a family of subgroups of $G$  closed under passing to subgroups and conjugation. A $G$-CW-complex $X$ is a \emph{classifying space for the family} $\mathcal{F}$ if for every $H\in \mathcal{F}$ the fixed-point set $X^H$ is contractible, and empty otherwise.

\end{defn}

The classifying space for the family $\mathcal{F}$ is usually denoted by $E_{\mathcal{F}}G$.
Moreover,  two models for $E_{\mathcal{F}}G$ are $G$-homotopy equivalent. A point is always a model for $E_{\mathcal{F}}G$ if $G\in \mathcal{F}$, and the closeness under subgroups implies that $E_{\mathcal{F}}G$ is always a contractible space.

If there is a family $\F$ of subgroups of $G$ with the previous closeness properties and a subgroup $H\leqslant G$, we denote by $\F\cap H$ the family whose elements are the intersections $F\cap H$, with $F\in \F$.
The family $\F\cap H$ of subgroups of $H$ is again closed under $H$-conjugation and taking subgroups.
In these conditions the action of $H$ over $E_{\F}G$ by restriction turns $E_{\F}G$ into a model for $E_{\F\cap H}H$.

The most important families of subgroups in this context are the trivial family $\mathcal F_{\{1\}}$, the family $\mathcal{F}_{Fin}$ of finite groups and the family $\mathcal{F}_{vc}$ of virtually cyclic groups of $G$; the classifying spaces for these families are respectively denoted by $EG$, $\underline{E} G$ and $\E G$. Observe that $\mathcal F_{\{1\}}\subseteq \mathcal{F}_{Fin}\subseteq \mathcal{F}_{vc}$, and that $\mathcal F_{\{1\}}=\mathcal{F}_{Fin}$ if and only if $G$ is torsion-free.
It is  also a standard argument to show that torsion-free virtually cyclic groups are cyclic (see for example Lemma 3.2 in \cite{Mac96}).
Then, for torsion-free groups, $\mathcal F_{vc}$ is the set of cyclic subgroups.

From now on we will describe the model of $\E G$ developed by L\"{u}ck-Weiermann in \cite{LW}, for the special families we are interested (the construction is indeed more general). Given a group $G$, and two subgroups $H$ and $K$,  we consider the equivalence relation generated by $H\sim K$ if $H\cap K$ has finite index in both $H$ and $K$.
Observe that if $H$ is in ${\mathcal F}_{vc}\setminus \mathcal{F}_{Fin}$ then $H\sim K$ if and only if $K$ is virtually cyclic and $H\cap K$ is infinite.
Also remark  that if $H\in \mathcal{F}_{Fin}$, then $H\sim K$ if and only if $K\in \mathcal{F}_{Fin}$.
The equivalence class of $H$ will be denoted by $[H]$.
This equivalence relation is preserved by conjugation, it can be defined $g^{-1}[H]g$ as $[g^{-1}Hg]$ for any $H\leqslant G$ and for any $g\in G$.

Now we can define the notion of \emph{commensurator}, central in this model and  in our paper:

\begin{defn}
\label{defn:comm}
Given an equivalence class $[H]$ of the relation $\sim$, the \emph{commensurator} of $[H]$ in $G$ is defined as the subgroup
$$ \textrm{Comm}_G[H]=\{g\in G\:|\ g^{-1}[H]g=[H]\}.$$
\end{defn}

In \cite{LW}, it is also defined a family of subgroups of $\textrm{Comm}_G[H]$ as:
$$\F [H]:=\{K<{ \textrm{Comm}_G[H]} \: |\ K\sim H \text{ or } |K|<\infty \}.$$
If $H$ is virtually cyclic, it is easy to check that this family is closed under taking subgroups and conjugation in {$\textrm{Comm}_G[H]$}. {We remark that the commensurator of $[H]$ is sometimes denoted by $N_G[H]$ in the literature}.

Now we have all the ingredients needed for building L\"{u}ck-Weiermann model:

\begin{thm}
\label{maintheorem}{\rm (\cite{LW}, Theorem 2.3)} We denote by $I$ a complete set of representatives of the $G$-orbits (under conjugation) of equivalence classes $[H]$ of infinite virtually cyclic subgroups of $G$, and we choose, for every $[H]\in I$, models for the classifying spaces $\underline{E}  \Comm_G[H]$ and $E_{\F [H]}\Comm_G[H]$.
We also choose a model for $\underline{E}  G$. Consider the $G$-pushout:
$$
\xymatrix{  \coprod_{[H]\in I}G\times_{\Comm_G[H]}\underline{E}  \Comm_G[H] \ar[r]^{\hspace{2cm} i} \ar[d]^{\coprod_{[H]\in I}id_G\times_{\Comm_G[H]}f_{[H]}}  & \underline{E} G \ar[d] \\
\coprod_{[H]\in I}G\times_{\Comm_G[H]}E_{\F [H]}\Comm_G[H]  \ar[r] & X }
$$
where  $f_{[H]}$ is a cellular $\Comm_G[H]$-map for every $[H]\in I$ and $i$ is the inclusion.
 In these conditions, $X$ is a model for $\E G$.
\end{thm}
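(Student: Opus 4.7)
The plan is to verify directly that the pushout $X$ satisfies the two defining properties of a classifying space for $\mathcal{F}_{vc}$. Since $X$ is a $G$-pushout of $G$-CW-complexes along $G$-cellular maps, it is itself a $G$-CW-complex, so it suffices to show that for every subgroup $K \leq G$ the fixed-point set $X^K$ is contractible when $K$ is virtually cyclic and empty otherwise. Because the inclusion $i$ is a $G$-cofibration, taking $K$-fixed points commutes with the pushout; combined with the standard formula $(G \times_L Y)^K = \coprod_{gL \in (G/L)^K} Y^{g^{-1}Kg}$, this reduces the problem to an explicit pushout of fixed-point sets, which I would analyze in three cases according to the nature of $K$.

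If $K \notin \mathcal{F}_{vc}$, then no conjugate $g^{-1}Kg$ is finite nor belongs to any $\mathcal{F}[H]$, so every corner of the pushout is empty and $X^K = \emptyset$. If $K$ is finite, every conjugate $g^{-1}Kg$ is a finite subgroup of $\Comm_G[H]$, hence lies in both the defining family of $\underline{E}\Comm_G[H]$ and in $\mathcal{F}[H]$. The matching summands of the top-left and bottom-left corners are therefore all contractible, and the $\Comm_G[H]$-map $f_{[H]}$ induces a weak equivalence between them. Pushing out such a weak equivalence along the cofibration $i^K$ preserves homotopy type, so $X^K \simeq (\underline{E}G)^K$, which is contractible.

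The delicate case is $K$ infinite virtually cyclic. Here $(\underline{E}G)^K = \emptyset$, and each summand of the top-left corner is likewise empty since $g^{-1}Kg$ is infinite, so $X^K$ coincides with the bottom-left corner. Let $[H_0] \in I$ be the unique orbit representative of $[K]$. A non-empty summand $E_{\mathcal{F}[H]}\Comm_G[H]^{g^{-1}Kg}$ forces $g^{-1}Kg \sim H$, hence $[K] = [gHg^{-1}]$, and therefore $[H] = [H_0]$ by the orbit-representative choice; each such summand is then contractible. The main obstacle, and the place where the commensurator is essential, is showing that the indexing set $\{g\Comm_G[H_0] : g^{-1}Kg \in \mathcal{F}[H_0]\}$ is a singleton: if $g_1, g_2$ both satisfy the condition, then $(g_1^{-1}g_2)H_0(g_1^{-1}g_2)^{-1} \sim H_0$, which by Definition \ref{defn:comm} forces $g_1^{-1}g_2 \in \Comm_G[H_0]$ and hence $g_1\Comm_G[H_0] = g_2\Comm_G[H_0]$. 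Thus $X^K$ is a single contractible space, completing the verification.
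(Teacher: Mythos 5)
The paper does not prove Theorem \ref{maintheorem}; it is cited directly from L\"{u}ck--Weiermann \cite[Theorem 2.3]{LW}. Your proof is a direct verification of the fixed-point criterion for classifying spaces and is essentially the argument L\"{u}ck--Weiermann themselves give: compute $K$-fixed points corner by corner using the double-coset formula and the fact that $(-)^K$ commutes with $G$-pushouts along $G$-cofibrations of $G$-CW-complexes, then do the three-way case analysis on $K$. The logical content matches.

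Two small points you left implicit are worth making explicit. First, in the infinite virtually cyclic case you must confirm the indexing set is actually \emph{non-empty}, not merely at most a singleton: pick $g$ with $g^{-1}[K]g = [H_0]$; then for any $x \in g^{-1}Kg$ one has $x^{-1}(g^{-1}Kg)x = g^{-1}Kg$, hence $x \in \Comm_G[H_0]$, so $g^{-1}Kg \leq \Comm_G[H_0]$ and $g^{-1}Kg \in \mathcal{F}[H_0]$, giving exactly one contractible summand. Second, in the finite case the weak-equivalence argument depends on the two coproducts in the left column having the \emph{same} index set after taking $K$-fixed points; this holds precisely because both defining families contain all finite subgroups of each $\Comm_G[H]$, so the matching is indeed summand-by-summand. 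With those remarks filled in, the proof is complete and coincides in substance with the original.
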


In practice, this theorem implies that the existence of good models for the proper classifying space of the commensurators and $G$, and also of the classifying spaces with respect to the families $\F [H]$ will lead to the knowledge of good models for $\E G$. Moreover, the push-out implies the existence of a long exact sequence in Bredon homology, and dimensional consequences that we will analyze in next section.

\subsection{Bredon homology}
\label{Sect:Bredon}

{In this subsection we will briefly review the main definitions concerning Bredon homology. We follow the topological concise approach from \cite{San08}, which we use in our computations}.

{Consider a discrete group $G$, $\mathcal{F}$ a family of groups which is closed under conjugation and taking subgroups.  Let $O_{\mathcal{F}}(G)$ be the \emph{orbit category}  whose objects are the homogeneous spaces $G/K$, $K\subset G$ with $K\in\mathcal{F}$, and whose morphisms are the $G$-equivariant maps.
Then a \emph{left Bredon module} $N$ over $O_{\mathcal{F}}(G)$ is a covariant functor $$N:O_{\mathcal{F}}(G)\ra \textbf{Ab},$$ where $\textbf{Ab}$ is the category of abelian groups}.

{Let $N$ be  a left Bredon module and $X$ a $G$-CW-complex, and assume that all the stabilizers of the $G$-action belong to the family $\mathcal{F}$. Then the \emph{Bredon chain complex} $(C_n^{\mathcal{F}}(X,N),\Phi_n)$ can be defined in the following way.
For every $d\geq 0$, consider a set $\{e_i^d\}_{i\in I}$ of representatives of orbits of $d$-cells in $X$, and denote by $\stab(e_i^d)$ the stabilizer of $e_i^d$.
Then we define the \emph{n-th group of Bredon chains} as $C_n^{\mathcal{F}}(X,N)=\bigoplus_{i\in I} N(G/\stab(e_i^d))$}.

{Consider now a $(d-1)$-face of $e_i^d$, which can be given as $ge$ for a certain $(d-1)$-cell $e$.
Then we have an inclusion of stabilizers $g^{-1}\stab(e_i^d)g\subseteq \stab(e)$.
As $g^{-1}\stab(e_i^d)g$ and $stab(e_i^d)$ are isomorphic, the previous inclusion induces an equivariant $G$-map $f\colon G/\stab(e_i^d)\ra G/\stab(e)$.
In turn, as $N$ is a functor, we have an induced homomorphism $N(f)\colon N(G/\stab(e_i^d))\ra N(G/\stab(e))$.
Taking into account that the boundary of $e_i^d$ can be written as $\partial e^d_i=\sum_{j=1}^n g_j e_j^{d-1}$ for certain $g_j\in G$ and using linear extension to all representatives of equivariant $d$-cells, we obtain a differential $\Phi_d\colon C_d^{\mathcal{F}}(X,N)\ra C_{d-1}^{\mathcal{F}}(X,N)$ for every $d>0$.
So we have the following definition:}

\begin{defn}
{The homology groups of the chain complex $(C_i^{\mathcal{F}}(X,N),\Phi_i)$ will be denoted by $H_i^{\mathcal{F}}(X,N)$ and called \emph{Bredon homology groups} of $X$ with coefficients in $N$ with respect to the family $\mathcal{F}$}.

We define $H_i^{\mathcal{F}}(G,N)$, the {\emph Bredon homology groups} of $G$ with coefficients in $N$  with respect to the family $\mathcal{F}$ as $H_i^{\F}(E_{\mathcal{F}}G,M)$.
\end{defn}

{These groups are preserved under $G$-equivariant homotopy equivalence}.

\textbf{Notation}. When $\mathcal{F}$ is the family of finite groups, we use indistinctly the notations $\underline{E}G$ or $E_{\mathcal{F}}G$, and similarly when $\mathcal{F}$ is the family of virtually cyclic groups and $\E G$ and $E_{\mathcal{F}}G$ notations for the corresponding classifying space. If $\mathcal{F}$ is the family that only contains the trivial group, the superindex in the Bredon homology will be supressed, as it is ordinary homology in this case.
It is worth noticing that there is an algebraic definition of $H_*^{\mathcal{F}}(G,M)$, however there is an isomorphism $H_*^{\mathcal{F}}(G,M)\simeq H_*^{\mathcal{F}}(E_{\mathcal{F}}G,M)$ between the algebraic and the topological definitions of Bredon homology \cite[page 15]{MiVa03}.
To not overload the paper with unnecessary notation, we commonly denote this homology groups by $H_*^{\mathcal{F}}(G,M)$, although as said above, we mainly deal with the topological definition.

The cyclic group of order $n$ will be denoted by $C_n$. When we want to consider its ring structure we might use  $\Z /{\bf n}$. When $n$ is prime, we might use $\mathbb{F}_n$ to emphasize its field structure.

\section{Artin groups of dihedral type}
\label{Sect:Artin}
In this section we present the main features of the Artin groups of dihedral type that we will need in the remaining of the paper. We start with the definition of the groups:

\begin{defn}
\label{Defn:Artindih}
Let $n\geq 1$. By $\mathrm{prod}(x,y;n)$ we denote the word of length $n$ that alternates $x$ and $y$ and starts with $x$.
For example, $\mathrm{prod}(x,y;3)=xyx$ and $\mathrm{prod}(x,y;4)=xyxy$. With this notation, a {\it dihedral Artin group of type $n$} is the group $$A_n=\gp{a,b}{\mathrm{prod}(a,b;n)=\mathrm{prod}(b,a;n)}.$$
\end{defn}
The name ``dihedral" comes from the associated Coxeter group, $$\gp{a,b}{a^2=b^2=1, \mathrm{prod}(a,b;n)=\mathrm{prod}(b,a;n)}$$ which is the dihedral group of order $2n$. Dihedral Artin groups are torsion-free, even more $A_n\cong F_{n-1}\rtimes \Z$, where $F_k$ is a free group of rank $k$. To see this, one can check that the kernel of $A_n\to \Z$, $a,b\mapsto 1$ is free on rank $n-1$. In particular, Dihedral Artin groups satisfy the Farrell-Jones conjecture \cite{BFW21}.

We are interested on understanding the commensurators of the virtually cyclic subgroups of $A_n$, for that we will use that $A_n$ is also a central extension of a virtually free group.

As indicated above an important ingredient of our calculations will be the description of some commensurators inside these Artin groups of subgroups from the family of virtually cyclic groups (which in this case turn to be just cyclic groups). We start by observing that for any  virtually cyclic subgroup $H$ of a group $G$, and any $h\in H$ of` infinite order, we have that
$$\Comm_G[H]=\Comm_G(\gen{h})=\{g\in G\mid gh^mg^{-1}=h^n \text{ for some }n,m\in \Z-\{0\}\}.$$

We will be interested in computing commensurators up to isomorphism, and we will use the fact that commensurators of conjugated subgroups are conjugated.
Let us denote by $\pi$ the natural
projection map $\pi\colon A_{n}\to \ol{A_{n}}\coloneq A_{n}/Z(A_{n})$. Then, for every $g\in A_{n}$,
$$\pi(\Comm_{A_{n}}[\gen{g}])\leqslant \Comm_{\ol{A_{n}}}[\gen{\pi(g)}],$$
and
$$Z(A_{n})\leqslant \Comm_{A_{n}}[\gen{g}].$$

We will prove the following.
\begin{lem}\label{lem:commensurators}
Let $A_n$ be a dihedral Artin group and $g\in A_n$ of infinite order. Then
\begin{enumerate}
\item If $\langle g\rangle \cap  Z(A_n)\neq \{1\}$ then $\Comm_{A_n}[\langle g\rangle]= A_n$ and $Z(A_n)\in [H]$,
\item If $\langle g\rangle \cap Z(A_n)= \{1\}$ then $\Comm_{A_n}[\langle g\rangle]\cong \mathbb{Z}^2$ and there is $\langle g'\rangle \in [\langle g\rangle]$  that is a direct factor of $\Comm_{A_n}[\langle g\rangle]$.
\end{enumerate}
\end{lem}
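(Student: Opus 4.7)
The proof splits naturally along the two cases of the hypothesis.

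For part (1), since $\langle g\rangle\cap Z(A_n)$ is a nontrivial subgroup of the infinite cyclic $\langle g\rangle$, it equals $\langle g^m\rangle$ for some $m\geq 1$ with $g^m\in Z(A_n)$. The centrality of $g^m$ forces $hg^mh^{-1}=g^m$ for every $h\in A_n$, so $g^m$ lies in $\langle g\rangle\cap\langle hgh^{-1}\rangle$ with finite index in each cyclic group. Thus every element of $A_n$ commensurates $\langle g\rangle$ and $\Comm_{A_n}[\langle g\rangle]=A_n$. Writing $g^m=z^k$ for a generator $z$ of $Z(A_n)$, the element $g^m=z^k$ also has finite index in $\langle z\rangle=Z(A_n)$, giving $Z(A_n)\in[\langle g\rangle]$.

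For part (2), the assumption $\langle g\rangle\cap Z(A_n)=\{1\}$ makes $\pi|_{\langle g\rangle}$ injective, so $\pi(g)$ has infinite order in $\bar A_n$. Since $\bar A_n$ is virtually free (as noted in the discussion preceding the lemma), $\pi(g)$ acts as a hyperbolic isometry of the Bass--Serre tree of $\bar A_n$, and the setwise stabilizer of its axis---which coincides with $\Comm_{\bar A_n}[\langle\pi(g)\rangle]$---is virtually cyclic. Because $Z(A_n)\leq\Comm_{A_n}[\langle g\rangle]$, the central extension $1\to Z(A_n)\to A_n\to\bar A_n\to 1$ restricts to a short exact sequence
$$1\to Z(A_n)\to\Comm_{A_n}[\langle g\rangle]\to\pi(\Comm_{A_n}[\langle g\rangle])\to 1$$
with $\mathbb{Z}$ kernel and virtually cyclic quotient. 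Hence $\Comm_{A_n}[\langle g\rangle]$ is virtually $\mathbb{Z}^2$; being torsion-free as a subgroup of $A_n$, it is isomorphic either to $\mathbb{Z}^2$ or to the Klein-bottle group $K$. Note that $\langle g,z\rangle\cong\mathbb{Z}^2$ is always contained in $\Comm_{A_n}[\langle g\rangle]$.

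The main technical step is to rule out the Klein-bottle case. If $\Comm_{A_n}[\langle g\rangle]\cong K=\langle s,t\mid sts^{-1}t\rangle$, then $z\in Z(K)=\langle s^2\rangle$ and $\langle g,z\rangle$ must equal the unique index-$2$ rank-$2$ abelian subgroup $\langle s^2,t\rangle$, so one can write $g=s^{2i}t^j$; a direct computation of $hgh^{-1}$ for $h\in K\setminus\langle s^2,t\rangle$ shows that the relation $hg^mh^{-1}\in\langle g\rangle$ required by the commensurator condition can hold only in degenerate coordinates, and these are excluded using the explicit structure of $A_n$---for instance via the free-by-cyclic decomposition $A_n\cong F_{n-1}\rtimes\mathbb{Z}$, by checking that the induced automorphism of $F_{n-1}$ never sends a conjugacy class to its inverse. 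Once $\Comm_{A_n}[\langle g\rangle]\cong\mathbb{Z}^2$ is established, take $g'$ to be the primitive element on the line of $\mathbb{Z}^2$ through $g$, so that $g=(g')^k$ for some $k\geq 1$; then $\langle g'\rangle\sim\langle g\rangle$, and $\langle g'\rangle$ is a direct summand of $\mathbb{Z}^2$. The chief obstacle is the Klein-bottle exclusion, which is not forced by the abstract classification of torsion-free virtually $\mathbb{Z}^2$ groups and genuinely uses the internal structure of $A_n$ beyond the central extension by the virtually free quotient.
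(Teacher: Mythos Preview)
Part (1) is correct and essentially matches the paper. For part (2) you correctly arrive at the dichotomy that $\Comm_{A_n}[\langle g\rangle]$, being torsion-free and a central extension of $\mathbb{Z}$ by a virtually cyclic group, is either $\mathbb{Z}^2$ or the Klein bottle group $K$. The argument breaks at the exclusion of $K$: you claim it follows from ``checking that the induced automorphism of $F_{n-1}$ never sends a conjugacy class to its inverse'', but no such check is carried out, and in fact the Klein bottle case \emph{does} occur. In $A_3=\langle a,b\mid aba=bab\rangle$ set $\Delta=aba$ and $g=ab^{-1}$. Exponent-sum shows $\langle g\rangle\cap Z(A_3)=\{1\}$, so we are in case (2); the relation gives $\Delta a=b\Delta$ and $\Delta b=a\Delta$, hence $\Delta g\Delta^{-1}=ba^{-1}=g^{-1}$. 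Since $g$ has infinite order and $\langle\Delta\rangle\cap\langle g\rangle=\{1\}$ (again by exponent-sum), the subgroup $\langle\Delta,g\rangle\cong K$ sits inside $\Comm_{A_3}[\langle g\rangle]$, which is therefore nonabelian and not isomorphic to $\mathbb{Z}^2$. So the step you flagged as ``the chief obstacle'' is not merely hard---it is false.

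For comparison, the paper does not set up the $\mathbb{Z}^2$/$K$ dichotomy; it instead asserts that $\pi(\Comm_{A_n}[\langle g\rangle])$ is infinite cyclic because ``all infinite virtually cyclic subgroups of $\bar A_n$ are infinite cyclic'', and then concludes that the commensurator is a central $\mathbb{Z}$-by-$\mathbb{Z}$ extension, hence $\mathbb{Z}^2$. The same example blocks this route as well: $\pi(\langle\Delta,g\rangle)\cong D_\infty$ is an infinite virtually cyclic subgroup of $\bar A_3\cong C_2*C_3$ that is not cyclic. Thus neither your approach nor the paper's, as written, closes the gap; the lemma as stated needs to allow the Klein bottle group as a possible commensurator.
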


\begin{proof}

It is a well-known fact that the centers of the dihedral Artin groups have different shape depending on the parity of $n$. Hence, we should divide the study of the commensurators of their cyclic subgroups in two subcases, even and odd.

{\bf Case $n$ even: }
In the group $$A_{2n}=\gp{a,b}{\mathrm{prod}(a,b;2n)=\mathrm{prod}(b,a;2n)},$$
it is known that $Z(A_{2n})=\gen{(ab)^{n}}$.
Moreover, $A_{2n}\cong \gp{x,y}{x^{-1}y^nx=y^n}$ via the isomorphism defined by $x\to b$, $y\mapsto ba$.
Therefore $A_{2n}/Z(A_{2n})=\gp{x,y}{x^{-1}y^nx=y^n,y^n =1}=\gp{\ol{x},\ol{y}}{\ol{y}^n=1}\cong C_{\infty}*C_{n}$.

Since $\ol{A_{2n}}$ is hyperbolic (even more, virtually free), the commensurator of an infinite order element  is virtually cyclic. (See for example \cite[Theorem 2]{Arz} bearing in mind that infinite cyclic subgroups of hyperbolic groups are quasi-convex).

Suppose first that $\pi(g)$ has infinite order. Then $\Comm_{\overline{A_{2n}}}[\langle \pi(g)\rangle]$ is infinite and virtually cyclic and hence $\pi (\Comm_{A_{2n}}[\gen{g}])$ is infinite and virtually cyclic.
Since all infinite virtually  cyclic subgroups of $\ol{A_{2n}}$ are infinite cyclic  we have that  $\pi (\Comm_{A_{2n}}[\gen{g}])$ is infinite cyclic. Therefore,
$\Comm_{A_{2n}}[\gen{g}]$ is a central extension of $\Z$ by $\Z$ and hence $\Comm_{A_n}[\gen{g}]\cong \Z^2$.
We can take $g'$ as a pre-image of a generator of $\pi (\Comm_{A_{2n}}[\gen{g}])$.

Suppose now that $\pi(g)$ has finite order.
Thus $\langle g \rangle \cap Z(A_{2n})$ is non-trivial and infinite,  and thus $[\langle g \rangle]= [Z(A_{2n})]$ and $\Comm_{A_{2n}}[\gen{g}]=\Comm_{A_{2n}}Z(A_n)=A_{2n}.$

{\bf Case $n$ odd:}
In the group $$A_{2n+1}=\gp{a,b}{\mathrm{prod}(a,b;2n+1)=\mathrm{prod}(b,a;2n+1)}$$ we have that $Z(A_{2n+1})=\gen{(ab)^{2n+1}}$.
Moreover, $A_{2n+1}\cong \gp{x,y}{xy^n=y^{n+1}x^{-1}}$ via the isomorphism defined by $x\to b$, $y\mapsto ab$.
Therefore
\begin{align*}
A_{2n+1}/Z(A_{2n+1})&=\gp{x,y}{xy^nx=y^{n+1},y^{2n+1}=1}=\gp{x,y}{(xy^n)^2=1=y^{2n+1}}\\
&=\gp{\ol{z},\ol{y}}{\ol{z}^2=1=\ol{y}^{2n+1}}\cong C_{2}*C_{2n+1},
\end{align*}
where $\ol{z}$ denotes the class of $xy^nZ(A_{2n+1})$.

Since $\ol{A_{2n+1}}$ is hyperbolic and infinite virtually cyclic subgroups of $\ol{A_{2n+1}}$ are infinite cyclic we get, arguing as above, that if $\pi(g)$ has infinite order then $\Comm_{A_{2n+1}}[\gen{g}]\cong \Z^2$.

Suppose now that $\pi(g)$  has finite order. Arguing as above, $\Comm_{A_{2n+1}}[\gen{g}] = \Comm_{A_{2n+1}}[Z(A_{2n+1})] = A_{2n+1}$.
\end{proof}


We  recall  the ordinary homology of these groups, which will be important in the remaining sections of the paper.

\begin{prop}
\label{OrdHom}
For $n\geq 2$.

If $n$ is even, we have  $H_0(A_n)=\mathbb{Z}$, $H_1(A_{n})=\mathbb{Z}\oplus \mathbb{Z}$, $H_2(A_{n})=\mathbb{Z}$ and $H_i(A_{n})=0$ for $i>2$.

 If $n$ is odd, we have $H_0(A_n)=\mathbb{Z}$, $H_1(A_{n})=\mathbb{Z}$ and $H_i(A_{n})=0$ for $i>1$.

\end{prop}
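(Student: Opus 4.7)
The plan is to realise $A_n$ as the fundamental group of an explicit two-dimensional $K(\pi,1)$ and then read the answer off a three-term cellular chain complex. Let $X_n$ be the presentation $2$-complex of the standard one-relator presentation of $A_n$: a single $0$-cell, two $1$-cells labelled $a$ and $b$, and one $2$-cell attached along the relator $r_n \coloneqq \mathrm{prod}(a,b;n)\,\mathrm{prod}(b,a;n)^{-1}$. Because $A_n$ is torsion-free (as recorded right after Definition~\ref{Defn:Artindih}) the relator $r_n$ is not a proper power, so Lyndon's classical theorem for one-relator groups implies that $X_n$ is aspherical, and hence $H_*(A_n)=H_*(X_n)$.

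Next I would write down the cellular chain complex $\mathbb{Z}\xrightarrow{\partial_2}\mathbb{Z}^2\xrightarrow{\partial_1}\mathbb{Z}$ of $X_n$. With only one $0$-cell one automatically has $\partial_1=0$, and $\partial_2(1)$ is the pair of exponent sums $(\sigma_a(r_n),\sigma_b(r_n))$, where $\sigma_x(w)$ counts occurrences of $x$ in $w$ with sign. The entire calculation then reduces to an elementary parity-dependent count of the letters $a$ and $b$ appearing in $\mathrm{prod}(a,b;n)$ and $\mathrm{prod}(b,a;n)$: for $n$ even the two words share the same exponent sums so $\partial_2=0$, whereas for $n$ odd they differ by one letter each way so $\partial_2(1)=\pm(1,-1)$ is injective. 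Reading off the kernels and cokernels of the resulting chain complex yields the stated groups in each parity.

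The only step above that is not routine is the appeal to Lyndon's asphericity theorem, and that is where I would focus care in the writeup. A self-contained alternative, relying only on facts already cited in the text, is to run the Lyndon--Hochschild--Serre spectral sequence for the semidirect product decomposition $A_n\cong F_{n-1}\rtimes\mathbb{Z}$: since both $F_{n-1}$ and $\mathbb{Z}$ have cohomological dimension one, the $E_2$-page is supported on a $2\times 2$ block and collapses immediately. The cost is identifying the monodromy action of $\mathbb{Z}$ on $H_1(F_{n-1})\cong\mathbb{Z}^{n-1}$, which is fiddlier than the three-cell count above; for that reason I would prefer the presentation-complex route.
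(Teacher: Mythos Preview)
Your proof is correct and follows essentially the same backbone as the paper's: both invoke Lyndon's asphericity theorem for torsion-free one-relator groups to conclude that the presentation $2$-complex is a $K(A_n,1)$, then read off $H_0$ from connectedness and $H_1$ from abelianization. The one genuine difference is in the treatment of $H_2$: the paper appeals to an external computation of the Schur multiplier (Clancy--Ellis, \cite{CE09}), whereas you obtain $H_2$ directly as the kernel of $\partial_2$ in the three-term cellular chain complex, via the parity-dependent exponent-sum count. Your route is more elementary and fully self-contained for this particular family; the paper's citation has the advantage of covering broader classes of Artin groups at once, but for the dihedral case your direct computation is preferable.
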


\begin{proof}
As dihedral Artin groups are one-relator groups, the Cayley complex (associated to the one-relator presentation) gives a 2-dimensional model for $K(A_n,1)$ (see \cite{Lyndon} or \cite{CMW04} for more general Artin groups).
 As $A_n$ is not free, this implies that $\textrm{gd }A_n=\textrm{cd }A_n=2$ for every $n$, and then in particular $H_i(A_n)=0$ for $i\geq 3$. Moreover, as $K(A_n,1)$ is connected, $H_0(A_n)=\mathbb{Z}$ for every $n$.
The formulae for $H_1$ follow from performing abelianization to the groups. Finally, the results for the Schur multiplier $H_2$ are a consequence of Theorem 3.1 in \cite{CE09}.
\end{proof}




\section{Bredon homology of Artin groups of dihedral type}
\label{Sect:BredonArtin}

In this section we start with the computation of homological invariants of the Artin groups of dihedral type, which is the main goal of this paper. The computation of the Farrell-Jones homology in the case of a general ring is usually very complicated. A powerful tool to undertake this computation is the $G$-equivariant version of the Atiyah-Hirzebruch spectral sequence, which is a spectral sequence of the 1st and 4th quadrant. In our case of interest, the $E_2$-page of this sequence is given by Bredon homology of the classifying space $\E G$, and the $E_{\infty}$-page encodes the Farrell-Jones homology. Let us make the last statement more precise.

Let $G$ be a discrete group, $\mathcal{F}$ its family of virtually cyclic subgroups, $R$ a ring. For every $q\in\mathbb{Z}$, denote by $K_q(R[-])$ the covariant module over the orbit category $O_{\mathcal F}(G)$ that sends every (left) coset  $G/H$ to the $K$-theory group $K_q(R[H])$. In these conditions, the $E_2$-page of the Atiyah-Hirzebruch  spectral sequence that we will use is defined as $E_2^{p,q}=H_p^{\mathcal{F}}(\E G,K_q(R[-]))$, for every $p\geq 0$ and $q\in\mathbb{Z}$, and converges to $H^G_{p+q}(\E G,\mathbf{K}(R))$ the $(p+q)$-th group of Farrell-Jones homology of $\E G$ with coefficients in $R$. An excellent source for more information about this sequence is \cite{LR05}.

Although the path {to} the computation is very clear, in general it is difficult to obtain explicit formulae for the Bredon
homology of these classifying spaces. Different reasons for this are the complexity of the models for $\E G$ and the fact that the exact values of $K_q(R[H])$ are only known for very special instances of $R$ and $H$. In fact, taking $R=\mathbb{Z}$ and $H$ the trivial group, the groups $K_q(\mathbb{Z})$ are not completely listed, as their value in some cases depend on the solution of the Vandiver conjecture, which remains unsolved. See \cite{Wei05}.

In this section we show that for Artin groups of dihedral type it is possible to describe to some extent the aforementioned Bredon homology groups, with coefficients in $K_q(R[-])$ for a general ring $R$ (see Theorem \ref{Thm:Bredon}); of course, the result strongly depends on the concrete shape of the $K$-theory groups of the group rings involved. The key to our computations is the description of the commensurators of the virtually cyclic subgroups of the groups $A_n$ (see previous section), and mainly the Mayer-Vietoris sequence associated to the push-out of L\"{u}ck-Weiermann model (Theorem \ref{maintheorem}), that was explicitly stated by Degrijse-Petrosyan in Section 7 of \cite{DP}, for the cohomological case. We offer here the homological version, which is the one needed in our context. To avoid confusions, we denote Bredon homology with respect to the family of virtually cyclic groups as $H_*^{vc}$ from now on.

\begin{prop}{\rm (\cite[Proposition 7.1]{DP}, see also \cite[Theorem 2.3]{LW})}
\label{mainMV} Let  $M$ be a left Bredon module over $O_{\F_{vc}}(A_n)$.
There is an exact sequence:

$$\ldots \stackrel{}{\rightarrow} H^{vc}_{i+1}(A_n,M) \stackrel{g_1^{i+1}}{\rightarrow} \bigoplus_{[H]\in I}H_i^{Fin\cap \Comm_{A_n}[H]}(\Comm_{A_n}[H],M) \stackrel{g_2^i}{\rightarrow} $$

$$\stackrel{g_2^i}{\rightarrow} (\bigoplus_{[H]\in I}H_i^{\mathcal{F}[H]}(\Comm_{A_n}[H],M))\oplus H_i^{Fin}(A_n,M) \stackrel{g_3^i}{\rightarrow} H^{vc}_i(A_n,M) \rightarrow \ldots$$

\end{prop}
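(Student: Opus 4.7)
The plan is to apply Bredon homology $H^{vc}_\ast(-;M)$ to the L\"uck--Weiermann pushout of Theorem \ref{maintheorem} that produces a model of $\E A_n$. Any $G$-pushout of $G$-CW-pairs induces a Mayer--Vietoris long exact sequence in $G$-equivariant Bredon homology: the cellular Bredon chain complex of the pushout fits into a short exact sequence with the chain complexes of the two ``arms'' and their intersection. This immediately produces a long exact sequence relating the Bredon homology groups of the four corners of the L\"uck--Weiermann diagram.

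The second step is to identify each of these groups with the terms written in the statement. The main tool is the induction isomorphism: for a subgroup $L\leqslant A_n$ and an $L$-CW-complex $Y$ whose cell stabilizers lie in a family $\mathcal{E}$ of subgroups of $L$, there is a natural isomorphism
$$H^{vc}_i(A_n \times_L Y;\, M) \cong H^{\mathcal{E}}_i(Y;\, \mathrm{Res}_L M),$$
which one proves by comparing Bredon chain complexes term by term: $A_n$-orbits of cells in $A_n\times_L Y$ correspond bijectively to $L$-orbits of cells in $Y$ with the same stabilizers, and the Bredon boundary maps are compatible with the natural functor $O_{\mathcal{E}}(L) \to O_{\F_{vc}}(A_n)$ sending $L/K \mapsto A_n/K$. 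Bredon homology also commutes with disjoint unions, so the coproducts appearing in the pushout split into direct sums over $[H]\in I$. Applied to our three classifying-space corners this gives: $\uE\Comm_{A_n}[H]$ is, a fortiori, a model for $E_{Fin\cap \Comm_{A_n}[H]}\Comm_{A_n}[H]$ because all its cell stabilizers are finite, so after induction it contributes $H^{Fin\cap \Comm_{A_n}[H]}_i(\Comm_{A_n}[H];\, M)$; likewise $E_{\F[H]}\Comm_{A_n}[H]$ contributes $H^{\F[H]}_i(\Comm_{A_n}[H];\, M)$, and $\uE A_n$ contributes $H^{Fin}_i(A_n;\, M)$. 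Substituting all of these into the Mayer--Vietoris sequence yields exactly the six-term rolling exact sequence in the statement.

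This argument is the direct homological transcription of the cohomological statement of Degrijse--Petrosyan in \cite[Section 7]{DP}, with the arrows reversed throughout; no new idea is required. The only mildly technical step is verifying the induction isomorphism, which is routine once the orbit-category formalism has been unraveled, while the rest is a formal consequence of the pushout structure and the additivity of Bredon homology on disjoint unions.
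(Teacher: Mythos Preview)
Your proof is correct and is precisely the standard argument. The paper does not actually prove this proposition: it is stated with a citation to \cite[Proposition 7.1]{DP} (cohomological version) and \cite[Theorem 2.3]{LW} (the pushout), and the authors use it as a black box. Your reconstruction—Mayer--Vietoris from the $G$-pushout together with the induction isomorphism $H^{vc}_\ast(A_n\times_L Y;M)\cong H^{\mathcal{E}}_\ast(Y;\mathrm{Res}_L M)$ obtained by matching $A_n$-orbits of cells in the induced space with $L$-orbits in $Y$—is exactly how the cited references establish the sequence, so there is nothing to compare.

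One small point of notation worth flagging: in the paper the coefficient module in each summand is written simply as $M$, whereas you (correctly) write $\mathrm{Res}_L M$. This is harmless abuse on the paper's part, since the restriction along $O_{\mathcal{E}}(L)\to O_{\F_{vc}}(A_n)$ is the only sensible way to interpret it; you were right to make it explicit.
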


Before we undertake our computations of the homology groups, we will compute their geometric dimension $\gde$ with respect to the family of virtually cyclic groups.

\begin{prop}
\label{gd}

The geometric dimension of $A_n$ with respect of the family of virtually cyclic groups is 3.

\end{prop}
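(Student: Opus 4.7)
The strategy is a two-sided estimate: bound $\gde A_n$ from above using the push-out of Theorem \ref{maintheorem}, and bound it from below by exhibiting a copy of $\mathbb{Z}^2$ inside $A_n$ and invoking $\gde(\mathbb{Z}^2)=3$ (Lück--Weiermann) together with the monotonicity of $\gde$ under subgroups (restricting $\E A_n$ to an $H$-action yields a model for $\E H$ since $\mathcal{F}_{vc}(A_n)\cap H=\mathcal{F}_{vc}(H)$).

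For the upper bound, the push-out of Theorem \ref{maintheorem} gives
$$\gde A_n \;\leq\; \max\bigl\{\gdp A_n,\; \max_{[H]\in I}\{\gdp \Comm_{A_n}[H]+1,\; \mathrm{gd}_{\mathcal{F}[H]}\Comm_{A_n}[H]\}\bigr\}.$$
I would then go through each input. Since $A_n$ is torsion-free with a two-dimensional $K(A_n,1)$ (Proposition \ref{OrdHom}), $\gdp A_n=2$. By Lemma \ref{lem:commensurators} the commensurators are either $A_n$ (when $[H]=[Z(A_n)]$) or $\mathbb{Z}^2$; in either case $\gdp+1=3$. For the auxiliary families: when $[H]=[Z(A_n)]$, a subgroup $K\leqslant A_n$ is commensurable with $Z(A_n)$ iff $\pi(K)$ is finite in $\bar A_n=A_n/Z(A_n)$, so pulling $\underline{E}\bar A_n$ back along $\pi$ yields a model for $E_{\mathcal{F}[Z(A_n)]}A_n$; since $\bar A_n$ is virtually free by the proof of Lemma \ref{lem:commensurators}, this model is $1$-dimensional. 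When $\Comm_{A_n}[H]\cong\mathbb{Z}^2$, Lemma \ref{lem:commensurators} shows that a representative of $[H]$ is a direct factor, and then $\mathbb{R}$ with $\mathbb{Z}^2$ acting through the projection onto the other factor is a $1$-dimensional model for $E_{\mathcal{F}[H]}\mathbb{Z}^2$. Thus the right-hand side of the bound equals $3$.

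For the lower bound I would show $\mathbb{Z}^2\leqslant A_n$. Let $c$ be the standard generator of $Z(A_n)$ and take $g=a$. Using the explicit isomorphisms $\bar A_{2m}\cong C_\infty*C_m$ and $\bar A_{2m+1}\cong C_2*C_{2m+1}$ from the proof of Lemma \ref{lem:commensurators}, the image $\bar a$ is a reduced word of length two in the free product (for instance $\bar a=\bar y^{m+1}\bar z$ in the odd case, $\bar a=\bar x^{-1}\bar y$ in the even case), and therefore has infinite order. Hence $\langle a\rangle\cap Z(A_n)=\{1\}$, so $\langle a,c\rangle\cong\mathbb{Z}^2$. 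Combining $\gde(\mathbb{Z}^2)=3$ with monotonicity gives $\gde A_n\geqslant 3$.

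The step I expect to require the most care is the identification of the classifying spaces for the auxiliary families $\mathcal{F}[H]$, because this is what prevents the Lück--Weiermann bound from exceeding $3$: without the reduction of $E_{\mathcal{F}[Z(A_n)]}A_n$ to the one-dimensional $\underline{E}\bar A_n$ via the central extension $1\to Z(A_n)\to A_n\to\bar A_n\to 1$, the bound would be dictated by a potentially larger model on the commensurator side. The remaining inputs (torsion-freeness, parities of centers, and $\gde\mathbb{Z}^2=3$) are already available from the preceding sections or from the literature.
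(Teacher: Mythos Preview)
Your proposal is correct, and the lower bound is essentially the paper's argument: exhibit a $\mathbb{Z}^2$ inside $A_n$ (the paper simply cites Lemma \ref{lem:commensurators} for $\Comm_{A_n}[\langle a\rangle]\cong\mathbb{Z}^2$, you give a more explicit verification that $\bar a$ has infinite order) and then invoke $\gde(\mathbb{Z}^2)=3$ from \cite{LW}.

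The upper bound, however, is obtained by a genuinely different route. The paper dispatches it in one line by quoting Degrijse's general result that torsion-free one-relator groups satisfy $\gde\leq 3$ \cite{Deg16}. You instead feed the L\"uck--Weiermann push-out bound with the explicit low-dimensional models for $\underline{E}\Comm_{A_n}[H]$ and $E_{\mathcal{F}[H]}\Comm_{A_n}[H]$ (the Bass--Serre tree of $\bar A_n$ and the real line), which are precisely the models developed later in Section \ref{HiF[H]} and Proposition \ref{induced}. Your argument is therefore more self-contained---it uses only machinery the paper already sets up for the homological computations and avoids the external reference---while the paper's version is shorter and applies uniformly to any torsion-free one-relator group. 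Both give the same bound of $3$ for the same underlying reason (two-dimensionality of the relevant proper classifying spaces and one-dimensionality of the $\mathcal{F}[H]$-models), but the paper packages this into a citation.
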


\begin{proof}
As seen in Lemma \ref{lem:commensurators}, $A_n=\langle a,b\,|\,\textrm{prod}(a,b;n)=\textrm{prod}(b,a;n) \rangle$, contains subgroups isomorphic to $\mathbb{Z}\oplus\mathbb{Z}$ (for example $\Comm_{A_n}(\langle a\rangle))$. According to Example 5.21 in \cite{LW}, this implies that $\gde A_n\geq 3.$

On the other hand, as the Artin groups $A_n$ are one-relator, Corollary 3 in \cite{Deg16} implies that $\gde A_n\leq 3$, and then $\gde A_n=3$.
\end{proof}

As stated, the strategy to compute $H_i^{vc}(A_n,M)$ goes through describing the different elements of the exact sequence of Proposition \ref{mainMV}.
We first understand the terms $H_i^{\mathcal{F}[H]}$ on Subsection \ref{HiF[H]}.
Then, in Subsection \ref{Ktheory}, we give a more concrete description of the different homology groups when we take coefficients in the $K$-theory.
Finally, in Subsection \ref{morphism} {we study the homomorphisms} $g_1$ and $g_2$ of the exact sequence of Proposition \ref{mainMV}  and prove our main theorems.

\subsection{Computing {the homology of the commensurators}}\label{HiF[H]}
Let us start the main calculations of this section.
It is clear from the previous Mayer-Vietoris sequence that the computations of the ordinary homology of the commensurators
(which includes the homology of $A_n$, as the center of $A_n$ is virtually cyclic)
 and the homology of the commensurators with coefficients in $\mathcal{F}[H]$ will give us valuable information about the Bredon homology of $A_n$ with respect to the family of virtually cyclic subgroups,
so we will perform these calculations in the sequel.
We start with the ordinary homology, which is straightforward and depends on the shape of the commensurators:

\begin{itemize}

\item {If} $\textrm{Comm}_{A_n}[H]\simeq\mathbb{Z}\oplus \mathbb{Z}$, we have $H_0(\textrm{Comm}_{A_n}[H])=H_2(\textrm{Comm}_{A_n}[H])=\mathbb{Z}$, $H_1(\textrm{Comm}_{A_n}[H])=\mathbb{Z}\oplus \mathbb{Z}$, $H_i(\textrm{Comm}_{A_n}[H])=0$ for $i>2$.

\item {If} $\textrm{Comm}_{A_n}[H]\simeq A_n$, see Proposition \ref{OrdHom}.

\end{itemize}

We concentrate now in the case of $\mathcal{F}[H]$, which we recall is the family of subgroups of $\Comm_{A_n}[H]$ that are either finite or commensurable with $H$. Concretely, we intend to compute $H_i^{\mathcal{F}[H]}(\textrm{Comm}_{A_n}[H],M)$ for every non-trivial cyclic subgroup $H$ of $A_n$.
We have two cases, either $H\cap Z(A_n)$ is trivial or not. We will use the following Convention throughout the paper without explictly refering to it.

\begin{conv}
For $[H]$ a class of infinite cyclic subgroups, we can always take a representative $H$ that is normal in $\Comm_{A_n}[H]$.

If $H\cap Z(A_n)$ is trivial, then Lemma \ref{lem:commensurators} states  that $H\unlhd \Comm_{A_n}[H]$. Moreover, by  Lemma \ref{lem:commensurators} we assume that  $H$ is a direct factor of  $\Comm_{A_n}[H]$.

If $H\cap Z(A_n)$ is non-trivial, then $[H]=[Z(A_n)]$ and we will assume in this case that we choose $Z(A_n)$ as the representative of $[H]$, and thus  $H\unlhd \Comm_{A_n}[H]$ again.

With this convention,  the projection $\pi\colon\textrm{Comm}_{A_n}[H]\rightarrow \textrm{Comm}_{A_n}[H]/H$ is well-defined and $\Comm_{A_n}[H]/H$ is isomorphic to $C_\infty$, $C_\infty*C_n$ or $C_2*C_{2n+1}$.
\end{conv}

Let now $M$ be a module over the orbit category of $\textrm{Comm}_{A_n}[H]$ with respect to $\mathcal{F}[H]$, and $\pi^{-1}M$ the induced module over the orbit category of $\Comm_{A_n}[H]/H$ with respect to the family of finite groups.
That is, for $K\leqslant \textrm{Comm}_{A_n}[H]/H$ finite {it is defined}
$$\pi^{-1}M((\Comm_{An}[H]/H)/K)\coloneqq M(\Comm_{A_n}[H]/\pi^{-1}(K)).$$ {Observe that this assignation gives rise to a natural transformation of functors $\pi^{-1}M\rightarrow M$. In the next result, which is essentially \cite[Lemma 4.2]{DP}, but stated for homology instead of cohomology, we will see that this natural transformation induces an isomorphism in Bredon homology}. It will be a powerful tool in our computations.
\begin{prop}
\label{induced}
Let $H$ be an infinite cyclic subgroup of $A_n$ normal in its commensurator.
For every $n\geq 0$, and every module $M$ over the orbit category of $\Comm_{A_n}[H]$ there is an isomorphism
{$$H_i^{Fin}(\Comm_{A_n}[H]/H,\pi^{-1} M)\simeq H_i^{\mathcal{F}[H]}(\Comm_{A_n}[H],M).$$}
Moreover, every model for $\uE(\Comm_{A_n}[H]/H)$ is a model for $E_{\mathcal{F}[H]}\Comm_{A_n}H$, with the action induced by the quotient map $\Comm_{A_n}[H]\to \Comm_{A_n}[H]/H$.
\end{prop}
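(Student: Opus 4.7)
The plan is to reduce the Bredon homology computation to a statement about fixed point sets and the shape of cell stabilizers under the projection $\pi$. The essential point, which must be verified first, is the characterization of $\mathcal{F}[H]$ via $\pi$: namely, a subgroup $K\leqslant \Comm_{A_n}[H]$ lies in $\mathcal{F}[H]$ if and only if $\pi(K)$ is finite. One direction is clear since both finite subgroups and subgroups commensurable with $H$ project onto a finite group in $\Comm_{A_n}[H]/H$. For the converse, note that $K/(K\cap H)\cong \pi(K)$ is finite; then $K\cap H$ is either finite, which forces $K$ to be finite, or infinite, in which case $K\cap H$ has finite index in both $K$ and $H$ (the latter because $H$ is infinite cyclic), so $K$ is commensurable with $H$.

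With that characterization in hand, I would take any model $X$ for $\underline{E}(\Comm_{A_n}[H]/H)$ and view it as a $\Comm_{A_n}[H]$-CW complex via $\pi$. For $K\leqslant \Comm_{A_n}[H]$, since $H$ acts trivially one has $X^K=X^{\pi(K)}$, and by definition of $\underline{E}(\Comm_{A_n}[H]/H)$ this is contractible if $\pi(K)$ is finite and empty otherwise. Combined with the first step this shows $X$ is a model for $E_{\mathcal{F}[H]}\Comm_{A_n}[H]$, giving the ``moreover'' clause.

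For the isomorphism of Bredon homology groups, I would compare the chain complexes term-by-term. A system of representatives of $\Comm_{A_n}[H]/H$-orbits of $d$-cells in $X$ is simultaneously a system of representatives of $\Comm_{A_n}[H]$-orbits of $d$-cells in $X$ (since the $H$-action is trivial), and if the $\Comm_{A_n}[H]/H$-stabilizer of a cell $e$ is $K$, then its $\Comm_{A_n}[H]$-stabilizer is $\pi^{-1}(K)$. By the very definition of the pulled-back module,
\[
\pi^{-1}M\bigl((\Comm_{A_n}[H]/H)/K\bigr)=M\bigl(\Comm_{A_n}[H]/\pi^{-1}(K)\bigr),
\]
so the groups of Bredon chains agree summand-by-summand. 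The differentials match as well, because the inclusion of stabilizers induced by an incidence of cells in $X$ downstairs is exactly the $\pi$-image of the inclusion upstairs, and $M$, $\pi^{-1}M$ agree on the resulting orbit morphism by construction.

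The main obstacle is not conceptual but bookkeeping: one has to confirm carefully that the identifications of cell stabilizers, face maps and orbit morphisms commute with $\pi$ in the way required, so that the chain isomorphism is natural in $M$. Given the structural observations above (the centrality of $H$ in $\Comm_{A_n}[H]$ ensures everything is canonical), this is a routine verification that faithfully mirrors the cohomological version in \cite[Lemma 4.2]{DP}, and once it is done, passing to homology yields the asserted isomorphism.
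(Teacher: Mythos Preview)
Your argument is correct, but it takes a genuinely different route from the paper's. After establishing the same ``moreover'' clause (via the characterization $K\in\mathcal{F}[H]\Leftrightarrow \pi(K)$ finite, which the paper states more tersely), the paper does \emph{not} compare chain complexes directly. Instead it invokes the Mart\'{i}nez-P\'{e}rez spectral sequence associated to the extension $1\to H\to \Comm_{A_n}[H]\to \Comm_{A_n}[H]/H\to 1$, with
\[
E^{p,q}_2=H_p^{Fin}\bigl(\Comm_{A_n}[H]/H,\; H_q^{\mathcal{F}[H]\cap \pi^{-1}(-)}(\pi^{-1}(-),M)\bigr)\ \Rightarrow\ H_{p+q}^{\mathcal{F}[H]}(\Comm_{A_n}[H],M),
\]
and observes that $E^{p,q}_2=0$ for $q\ge 1$ because $\pi^{-1}(K)$ already lies in the family $\mathcal{F}[H]\cap\pi^{-1}(K)$, forcing the sequence to collapse onto the $q=0$ row and giving the isomorphism. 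This is also the argument in \cite[Lemma~4.2]{DP}, so your closing remark that your chain-level comparison ``faithfully mirrors'' that reference is not quite accurate.

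Your approach is more elementary and arguably more transparent here: once one knows the two classifying spaces are the \emph{same} CW-complex with stabilizers matching via $\pi^{-1}$, the Bredon chain groups and differentials coincide by the very definition of $\pi^{-1}M$, and no spectral sequence is needed. The paper's approach, on the other hand, is the one that generalizes cleanly (to non-cyclic normal subgroups, or to situations where one only has a comparison of families rather than an identification of models), which is presumably why \cite{DP} and the paper prefer it.
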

\begin{proof}
The argument here is taken from the proof of \cite[Lemma 4.2]{DP}. We write it here to make our paper more self-contained.

The projection $\pi\colon \Comm_{A_n}[H]\to \Comm_{A_n}[H]/H$ maps the family $\mathcal{F}[H]$ onto the family $Fin$ of finite subgroups of the quotient $\textrm{Comm}_{A_n}[H]/H$.
Moreover, the pre-image $\pi^{-1}(K)$ for any finite group $K$ of $\Comm_{A_n}[H]/H$ lies in $\mathcal{F}[H]$.
Therefore, $\uE \Comm_{A_n}[H]/H$ is a model for $E_{\mathcal{F}[H]}\textrm{Comm}_{A_n}[H]$, with the action induced by the projection $\pi$.

Consider now the spectral sequence associated to the short exact sequence
$1\to H\to \Comm_{A_n}[H]\stackrel{\pi}{\to} \Comm_{A_n}[H]/H\to 1$ for homology \cite{Mar02}.
For every module $M$ over the orbit category of $\Comm_{A_n}[H]$ we have
$$E^{p,q}_2 (M)= H_p^{{Fin}}(\Comm_{A_n}[H]/H, H_q^{\mathcal{F}[H]\cap \pi^{-1}(-)}(\pi^{-1}(-), M)),  $$
which converges to $E^{p,q}_{\infty} (M)=H_{p+q}^{\mathcal{F}[H]}(\Comm_{A_n}[H], M)$.

Observe that $E^{p,q}_2$ is trivial for $q\geq 1$, as for every finite subgroup $K<\Comm_{A_n}[H]/H$, $\pi^{-1}(K)$ belongs to the family  $\mathcal{F}[H]\cap \pi^{-1}(K)$, and then $H_q^{\mathcal{F}[H]\cap \pi^{-1}(-)}(\pi^{-1}(-),M)$ is zero. Thus, as $E^{i,0}_2 (M)=H_i^{Fin}(\Comm_{A_n}[H]/H,
\pi^{-1}M)$
in the 0-th row of the sequence, we have  {$$H_i^{Fin}(\Comm_{A_n}[H]/H,\pi^{-1} M)\simeq H_i^{\mathcal{F}[H]}(\Comm_{A_n}[H],M).$$} for every $i\geq 0.$
We are done.
\end{proof}

 The family of finite subgroups is easier to deal with than the family $\mathcal{F}[H]$,  taking account of the previous proposition, the next step is to construct the corresponding classifying spaces for proper actions of the commensurators in $A_n$ modulo the corresponding subgroups. We have the following:

\begin{itemize}

\item If $\textrm{Comm}_{A_n}[H]\simeq\mathbb{Z}\oplus \mathbb{Z}$, then there is a representative of the class $[H]$ that can be identified with one of these copies of $\Z$; hence, we may assume that the inclusion $H\hookrightarrow \textrm{Comm}_{A_n}[H]=\mathbb{Z}\oplus \mathbb{Z}$ is the inclusion of the first factor, and so $\textrm{Comm}_{A_n}[H]/H$ is isomorphic to $\mathbb{Z}$. Then a model for $\underline{{E}}(\textrm{Comm}_{A_n}[H]/H)$ is the straight line, and the action is by shifting.

\item If $\textrm{Comm}_{A_n}[H]\simeq A_n$, then $Z(A_n)$ is a representative of the class $[H]$.
We have seen that $\textrm{Comm}_{A_n}[H]/H=A_n/Z(A_n)$ is an amalgamated product of two cyclic groups, depending its concrete shape on the parity of $n$.
In this case a tree model for ${\uE}(\textrm{Comm}_{A_n}[H]/H)$ can be explicitly constructed.

If $n$ is even, we have that $A_{n}/Z(A_{n})\cong C_{\infty}*C_{n}$.
Denote $C_\infty*C_{n}$ by $\overline{A_n}$.
Let $s$ be a generator of $C_\infty$.
Then Bass-Serre theorem guarantees that the graph with vertex set $\overline{A_n}/C_n$, edge set $\overline{A_n}$ and incidence maps $\iota(g) = gC_\infty$ and $\tau(g)=gsC_\infty$ is a $\overline{A_n}$-equivariant oriented tree.

If $n$ is odd, we have that $A_{n}/Z(A_{n})\cong C_{2}*C_{n}$. Denote $C_2*C_{n}$ by $\overline{A_n}$.
Then Bass-Serre theorem guarantees that the graph with vertex set $\overline{A_n}/C_2 \sqcup \overline{A_n}/C_n$, edge set $\overline{A_n}$ and incidence maps $\iota(g) = gC_2$ and $\tau(g)=gC_n$ is a $\overline{A_n}$-equivariant oriented tree.

Note that in both cases, the isotropy groups are the finite subgroups of $\overline{A_n}$ and they fix exactly a vertex; and therefore these are  models for $\uE \overline{A_n}$.
\end{itemize}

As the classifying spaces of the commensurators that we have described are all 1-dimensional, we will use the following result of Mislin:

\begin{lem}[\cite{MiVa03}, Lemma 3.14]
\label{1dim}
Suppose that for a family $\mathcal{F}$ of subgroups of a group $G$ there is a tree model $T$ for $E_{\mathcal{F}}G$. Let $S_e$ be the stabilizer of the edge $e\in T$ and $S_v$ the stabilizer of a vertex. Let $N$ be a coefficient module. Then $H_i^{\mathcal{F}}(G,N)=0$ for $i>1$ and there is an exact sequence:

$$ 0\rightarrow H_1^{\mathcal{F}}(G,N)\rightarrow \bigoplus_{[e]} N(G/S_e) \rightarrow \bigoplus_{[v]} N(G/S_v)\rightarrow H_0^{\mathcal{F}}(G,N)\rightarrow 0,$$ where $[e]$ and $[v]$ run over the $G$-orbits of edges and vertices of $T$, respectively.
\end{lem}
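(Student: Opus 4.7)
The plan is to invoke directly the topological definition of Bredon homology recalled in Subsection \ref{Sect:Bredon} and exploit the fact that a tree is a $1$-dimensional $G$-CW-complex. First I would identify $H_i^{\mathcal{F}}(G,N)$ with $H_i^{\mathcal{F}}(T,N)$ using the standard isomorphism between the algebraic and topological definitions of Bredon homology (together with the fact that $T$ is a model for $E_{\mathcal{F}}G$ by hypothesis). Since all isotropy groups of the $G$-action on $T$ lie in $\mathcal{F}$, the Bredon chain complex $(C_*^{\mathcal{F}}(T,N),\Phi_*)$ is defined.

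Next I would compute the Bredon chain groups explicitly. Picking a set of representatives $\{v_j\}$ for the $G$-orbits of vertices and $\{e_i\}$ for the $G$-orbits of edges, the recipe from Subsection \ref{Sect:Bredon} yields
$$C_0^{\mathcal{F}}(T,N)=\bigoplus_{[v]} N(G/S_v), \qquad C_1^{\mathcal{F}}(T,N)=\bigoplus_{[e]} N(G/S_e),$$
and $C_i^{\mathcal{F}}(T,N)=0$ for $i\geq 2$ because $T$ has no cells of dimension $\geq 2$. From the latter vanishing we immediately conclude that $H_i^{\mathcal{F}}(G,N)=0$ for $i>1$, which is the first part of the statement.

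It remains to extract the four-term exact sequence. The chain complex reduces to
$$0\longrightarrow \bigoplus_{[e]} N(G/S_e)\xrightarrow{\ \Phi_1\ } \bigoplus_{[v]} N(G/S_v)\longrightarrow 0,$$
so by definition $H_1^{\mathcal{F}}(G,N)=\ker\Phi_1$ and $H_0^{\mathcal{F}}(G,N)=\operatorname{coker}\Phi_1$. Splicing these identifications produces the desired exact sequence
$$0\to H_1^{\mathcal{F}}(G,N)\to \bigoplus_{[e]} N(G/S_e)\xrightarrow{\Phi_1}\bigoplus_{[v]} N(G/S_v)\to H_0^{\mathcal{F}}(G,N)\to 0.$$

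The only genuinely non-trivial point is checking that the differential $\Phi_1$ coming from the general construction in Subsection \ref{Sect:Bredon} agrees, on each orbit summand, with the map $N(f_\tau)-N(f_\iota)$ induced by the two inclusions $S_e\hookrightarrow S_{\tau(e)}$ and $S_e\hookrightarrow S_{\iota(e)}$ of edge stabilizer into the stabilizers of its endpoints. This is a direct bookkeeping exercise once a compatible orientation is fixed on each orbit representative of an edge, and it is the step where one must be careful; however, since the statement of the lemma does not specify $\Phi_1$ beyond its existence, the bulk of the argument is simply the observation that a $1$-dimensional Bredon chain complex collapses into a four-term exact sequence.
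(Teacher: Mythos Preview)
Your argument is correct and is exactly the standard proof: the Bredon chain complex of a tree is concentrated in degrees $0$ and $1$, so higher homology vanishes and the remaining two-term complex yields the four-term exact sequence. The paper does not actually prove this lemma---it is quoted from \cite{MiVa03}---and only adds in Remark~\ref{diff} the description of the middle map as induced by the boundary $\partial[e]=[v_1]-[v_2]$, which matches your final paragraph.
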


\begin{rem}
\label{diff}
It is interesting to remark that the middle map $\bigoplus_{[e]} N(G/S_e) \rightarrow \bigoplus_{[v]} N(G/S_v)$ is induced by the (formal) border operation defined by $\partial[e]=[v_1]-[v_2]$, being $v_1$ and $v_2$ vertices of a representative $e$ of $[e]$. The tree is assumed to be oriented.
\end{rem}

In particular we obtain:

\begin{cor}
\label{zerohomology}
For every $n\geq 2$, $i\geq 2$, $H\leqslant  A_n$ infinite cyclic and every module $M$ over the orbit category with respect to the family $\mathcal{F}[H]$, we have $H_i^{\mathcal{F}[H]}(\Comm_{A_n}[H],M)=0$.
\end{cor}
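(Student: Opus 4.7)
The plan is to combine the identification of Bredon homology with a quotient computation (Proposition \ref{induced}) with the fact that each of the quotient groups $\Comm_{A_n}[H]/H$ admits a tree as a model for its classifying space for proper actions, and then apply Mislin's Lemma \ref{1dim}.

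First, by Proposition \ref{induced}, there is an isomorphism
\[
H_i^{\mathcal{F}[H]}(\Comm_{A_n}[H],M)\simeq H_i^{Fin}(\Comm_{A_n}[H]/H,\pi^{-1}M),
\]
so it suffices to show the right-hand side vanishes for $i\geq 2$. Second, by Lemma \ref{lem:commensurators} together with the Convention fixed above, the quotient $\Comm_{A_n}[H]/H$ is isomorphic to one of the three groups
\[
\mathbb{Z}, \qquad C_\infty * C_n, \qquad C_2 * C_{2n+1},
\]
according to whether $\Comm_{A_n}[H]\cong\mathbb{Z}\oplus\mathbb{Z}$, or $\Comm_{A_n}[H]=A_n$ with $n$ even, or $\Comm_{A_n}[H]=A_n$ with $n$ odd. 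In each of these three cases, the bullet-point discussion preceding Lemma \ref{1dim} exhibits an explicit $1$-dimensional tree model for $\underline{E}(\Comm_{A_n}[H]/H)$: the real line for $\mathbb{Z}$, and the Bass--Serre trees associated to the given free-product decompositions for the other two.

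Finally, since a tree model for $\underline{E}(\Comm_{A_n}[H]/H)$ exists, Mislin's Lemma \ref{1dim} applies with $G=\Comm_{A_n}[H]/H$, the family $\mathcal{F}=Fin$, and the coefficient module $N=\pi^{-1}M$. That lemma directly yields $H_i^{Fin}(\Comm_{A_n}[H]/H,\pi^{-1}M)=0$ for all $i\geq 2$, and transporting this back through the isomorphism of Proposition \ref{induced} gives the claim. There is essentially no obstacle here: the real work has already been done in establishing the quotient isomorphism, in describing the commensurators, and in constructing the tree models; the corollary is just the assembly of these pieces.
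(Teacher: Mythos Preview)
Your proof is correct and follows essentially the same approach as the paper: invoke Proposition~\ref{induced} to pass to $H_i^{Fin}(\Comm_{A_n}[H]/H,\pi^{-1}M)$, observe that $\underline{E}(\Comm_{A_n}[H]/H)$ is a tree (hence $1$-dimensional), and conclude via Lemma~\ref{1dim}. The paper's version is simply more terse, omitting the explicit case split.
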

\begin{proof}
Recall that by Proposition \ref{induced} $H_i^{\mathcal{F}[H]}(\Comm_{A_n}[H],M)\simeq H_i^{Fin}(\Comm_{A_n}[H]/H,\pi^{-1}M)$. As $\uE\Comm_{A_n}[H]/H$ is 1-dimensional, the result follows by Lemma \ref{1dim}.
\end{proof}

So it remains to compute $H_0^{\mathcal{F}[H]}(\textrm{Comm}_{A_n}[H],M)$ and $H_1^{\mathcal{F}[H]}(\textrm{Comm}_{A_n}[H],M)$. We assume here a general coefficient module $M$, but the reader may keep in mind that our case of interest is $M(A_n/-
)=K_q (R[-])$, $q\in\Z$.

\begin{prop}\label{homology0and1}
Let $A_n =\langle a, b \,\textrm{ }|\textrm{ } \mathrm{prod}(a,b;n) = \mathrm{prod}(b,a;n)\rangle$
 be a dihedral Artin group, let $H\leqslant A_n$ be an infinite cyclic group that is normal in its commensurator, let $\pi\colon \Comm_{A_n} [H]\to \Comm_{A_n}[H]/H$ be the natural projection.
Let $M$ be a module over the orbit category of $\Comm_{A_n}[H]$.
Then
\begin{enumerate}
\item[(i)] If $H\cap Z(A_n)$ is trivial, then
$$H_0^{\mathcal{F}[H]}(\Comm_{A_n}[H],M)=H_1^{\mathcal{F}[H]}(\Comm_{A_n}[H],M)=M(\Comm_{A_n}[H]/H).$$
\item[(ii)] If $H=Z(A_{n})$, $n=2k+1$  and if $f\colon M(A_{n}/Z(A_n))\to M(A_n/\langle b(ab)^k\rangle ) \oplus M(A_n/\langle ab \rangle)$ {is} induced by the  natural projections,  we have
$$H_0^{\mathcal{F}[H]}(A_{n},M)= \mathrm{coker} f\quad \text { and } \quad H_1^{\mathcal{F}[H]}(A_{n},M)= \ker  f.$$

\item[(iii)] If $H=Z(A_n)$ and $n=2k$, we have
 $$H_0 ^{\mathcal{F}[H]}(A_{n},M)=M(A_{n}/\langle ab \rangle)\quad \text{  and  }\quad H_1^{\mathcal{F}[H]}(A_{n},M)= M(A_{n}/Z(A_n)).$$
\end{enumerate}
Note that in cases (ii) and (iii), $A_n=\Comm_{A_n}[H]$.
\end{prop}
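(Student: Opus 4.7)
The strategy is to combine Proposition~\ref{induced}, which identifies $H_i^{\mathcal{F}[H]}(\Comm_{A_n}[H], M)$ with $H_i^{Fin}(\Comm_{A_n}[H]/H, \pi^{-1}M)$, with Lemma~\ref{1dim}, applied to the $1$-dimensional tree models of $\underline{E}(\Comm_{A_n}[H]/H)$ just described. In each case the problem reduces to analyzing the Bredon boundary $\bigoplus_{[e]} \pi^{-1}M((\Comm_{A_n}[H]/H)/S_e) \stackrel{\partial}{\to} \bigoplus_{[v]} \pi^{-1}M((\Comm_{A_n}[H]/H)/S_v)$, whose kernel and cokernel compute $H_1^{\mathcal{F}[H]}$ and $H_0^{\mathcal{F}[H]}$. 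For case~(i), $\Comm_{A_n}[H]/H\cong \mathbb{Z}$ acts on $\mathbb{R}$ with one orbit each of vertices and edges (both trivially stabilized); the boundary becomes $M(\phi_1) - M(\phi_{\tilde s})$ on $M(\Comm_{A_n}[H]/H)$, where $\tilde s$ is a lift of a generator. Since $\Comm_{A_n}[H]\cong \mathbb{Z}^2$ is abelian, conjugation by $\tilde s$ acts trivially on $H$, making both endpoint morphisms coincide; hence $\partial = 0$ and $H_0 = H_1 = M(\Comm_{A_n}[H]/H)$.

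For case~(ii), with $n = 2k+1$ and $\overline{A_n}\cong C_2 * C_{2k+1}$, the Bass--Serre tree has two orbits of finite-stabilizer vertices (with stabilizers $C_2$ and $C_{2k+1}$) and one orbit of trivially stabilized edges. Using the presentation $A_{2k+1} \cong \langle x, y \mid xy^k x = y^{k+1}\rangle$ with $x = b$, $y = ab$, and the identity $(xy^k)^2 = y^{2k+1}$ (which generates $Z(A_n)$), one verifies that $\pi^{-1}(C_2) = \langle b(ab)^k\rangle$ and $\pi^{-1}(C_{2k+1}) = \langle ab\rangle$. Lemma~\ref{1dim} then yields the exact sequence
\[
0 \to H_1 \to M(A_n/Z(A_n)) \stackrel{f}{\to} M(A_n/\langle b(ab)^k\rangle) \oplus M(A_n/\langle ab\rangle) \to H_0 \to 0,
\]
with $f$ the pair of natural projections induced by the two endpoints of the representative edge, so $H_0 = \mathrm{coker}\, f$ and $H_1 = \ker f$.

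For case~(iii), with $n = 2k$ and $\overline{A_n}\cong C_\infty * C_n$, the $1$-dimensional model of $\underline{E}\overline{A_n}$ (obtained by thickening the $C_\infty$-vertex to a copy of $\mathbb{R}$) has one orbit of finite-stabilizer vertices (with stabilizer $C_n$, lifting to $\langle ab\rangle$) and one orbit of trivially stabilized edges whose representative has both endpoints in the $C_n$-orbit. The boundary again has the form $M(\phi_1) - M(\phi_{\tilde s})$; because $H = Z(A_n)$ is central in $A_n$, the conjugation $c_{\tilde s}$ by any lift of $s$ restricts to the identity on $Z(A_n)$, so both endpoint morphisms agree on the $K$-theoretic functor of interest. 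Hence $\partial = 0$, yielding $H_0 = M(A_n/\langle ab\rangle)$ and $H_1 = M(A_n/Z(A_n))$. The main obstacle will be pinpointing the correct $\underline{E}\overline{A_n}$-model in cases~(ii) and~(iii), identifying the $\pi$-preimages of the vertex stabilizers in $A_n$, and verifying in cases~(i) and~(iii) that the boundary vanishes by exploiting the abelianness of $\Comm_{A_n}[H]$ and the centrality of $Z(A_n)$, respectively.
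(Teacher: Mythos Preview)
Your proof follows the same strategy as the paper: reduce via Proposition~\ref{induced} to $H_i^{Fin}(\Comm_{A_n}[H]/H,\pi^{-1}M)$, apply Lemma~\ref{1dim} to the explicit one--dimensional tree models described just before the proposition, and read off $H_1$ and $H_0$ as kernel and cokernel of the Bredon boundary in each of the three cases. The identification of the stabilisers, their $\pi$-preimages in $A_n$, and the resulting exact sequences all match the paper's argument.

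One point of phrasing in case~(i) deserves correction. You write that abelianness of $\Comm_{A_n}[H]\cong\mathbb{Z}^2$ makes ``both endpoint morphisms coincide''. This is not literally true: in the orbit category $O_{\mathcal{F}[H]}(\Comm_{A_n}[H])$ the morphisms $\phi_1$ and $\phi_{\tilde s}$ from $\Comm_{A_n}[H]/H$ to itself correspond to the distinct cosets $eH$ and $\tilde sH$ (and $\tilde s\notin H$), so $\phi_1\neq\phi_{\tilde s}$. What abelianness actually gives is that $\tilde s^{-1}h\tilde s=h$ for all $h\in H$; hence any module---in particular $K_q(R[-])$---which sends the morphism determined by $g$ to the map induced by the group homomorphism $h\mapsto g^{-1}hg$ will satisfy $M(\phi_1)=M(\phi_{\tilde s})$, and then $\partial=0$. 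You handle this nuance correctly in case~(iii) when you say the morphisms ``agree on the $K$-theoretic functor of interest''; the same caveat applies in case~(i). The paper's own proof is equally terse at these two spots (it simply asserts ``hence $f$ is trivial''), so your argument matches it in both approach and level of detail.
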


\begin{proof}
In the case $H\cap Z(A_n)$ is trivial, the classifying space $\uE\Comm_{A_n}[H]/H$ is the real line,  there is one orbit of edges and one orbit of vertices, and the action is free. Then by Remark \ref{diff}, the central map in that exact sequence is trivial and by Lemma \ref{1dim},
$$H_1^{Fin}(\Comm_{A_n}[H]/H,\pi^{-1}M)= \pi^{-1}M((\Comm_{A_n}[H]/H)/\{1\})$$ and
$$H_0^{Fin}(\Comm_{A_n}[H]/H,\pi^{-1}M)=\pi^{-1}M((\Comm_{A_n}[H]/H)/\{1\}).$$
Using  Proposition \ref{induced}, case (i) follows.

Let us discuss the case $H = Z(A_n)$. Now $\Comm_{A_n}[H]/H=A_n/Z(A_n)$ is an amalgamated product of two finite cyclic groups or of a finite cyclic and an infinite cyclic group, depending on the parity of $n$. In both cases, there is a tree model for the classifying space for proper actions of the commutator modulo $H$. Denote $\Comm_{A_n}[H]/H$ by $\overline{A_n}$ for brevity.

We consider first the case $n=2k+1$.
Here $\overline{A_{n}}=S*L$, with $S=\langle b(ab)^k Z(A_{n})\rangle \cong C_2$ and $L=\langle ab Z(A_{n}) \rangle \cong C_{n}$.
The Bass-Serre tree has two orbits of vertices, with stabilizers conjugated to $S$ or $L$, and one free orbit of edges.
Then, by Lemma \ref{1dim} 

\begin{equation}
\label{MVodd}
H_1^{Fin}(\overline{A_n},\pi^{-1} M)\hookrightarrow \pi^{-1}M(\overline{A_{n}}/\{1\})\stackrel{f}{\rightarrow}
 \end{equation}
$$\stackrel{f}{\rightarrow} \pi^{-1}M(\overline{A_{n}}/S)\oplus \pi^{-1}M(\overline{A_{n}}/L)\twoheadrightarrow H_0^{Fin}(\overline{A_{n}},\pi^{-1}M).
$$
 Hence, we obtain that $$H_0^{Fin}(\overline{A_{n}},\pi^{-1}M)=(\pi^{-1}M(\overline{A_{n}}/S)\oplus \pi^{-1}M(\overline{A_{n}}/L))/\textrm{Im }f$$ and $$H_1^{Fin}(\overline{A_{n}},\pi^{-1}M)=\textrm{Ker }f.$$
  Observe that the two components of $f$ are induced by the images of the projections $\overline{A_{n}}/\{1\}\rightarrow \overline{A_{n}}/S$ and $\overline{A_{n}}/\{1\}\rightarrow \overline{A_{n}}/L$ by the functor $\pi^{-1}M$. The case (ii) follows by applying Proposition \ref{induced}.

Consider now the  case  $n=2k$.
In this situation $\overline{A_{n}}=A_{n}/Z(A_{n})$ is a free product $G=S\ast L$, with $S=\langle b Z(A_n) \rangle \simeq C_{\infty}$ and $L=\langle ab Z(A_n) \rangle\simeq C_n$.
We denote by $s$  the element  $b Z(A_n)$.
The Bass-Serre tree for this group has
one orbit of vertices with stabilizers conjugate to $L$ and one orbit of edges with trivial stabilizers.

The exact sequence of Proposition \ref{1dim} has the following form:
\begin{equation}
\label{MVeven}
H_1^{Fin}(\overline{A_n},\pi^{-1}M)\hookrightarrow M(\overline{A_{n}}/\{1\}) \stackrel{f}{\rightarrow}  \pi^{-1}M(\overline{A_{n}}/L) \twoheadrightarrow H_0^{Fin}(\overline{A_{n}},\pi^{-1}M).
\end{equation}
The function $f$ is induced by  $g\{1\}\in A_{2n}/\{1\} \mapsto gsL- gL$ by the functor $M$.
And hence $f$ is trivial.

Hence, we deduce that $$H_1^{Fin}(\overline{A_{n}},\pi^{-1}M)= \pi^{-1}M(\overline{A_{n}}/\{1\})$$ and $$H_0^{Fin}(\overline{A_{n}},\pi^{-1}M)=\pi^{-1}M(\overline{A_{n}}/L).$$
The result now follows by applying Proposition \ref{induced}.
\end{proof}

Observe that taking into account that $\overline{A_{2n}}$ is one-relator, this computation agrees with the result of Corollary 3.23 in \cite{MiVa03}, in the case of $M=R_\mathbb{C}$, the complex representation ring.

\subsection{ Coefficients in  $K$-theory groups}
\label{Ktheory}
From the point of view of Farrell-Jones conjecture, the case of interest is the coefficients in the $K$-theory $K_q(R)$ of the group ring. We now describe all the homology groups of the exact sequence of Proposition \ref{mainMV} with  coefficients in this module, except the groups $H^{vc}(A_n,K_q(R[-]))$ which will be studied in the next section.

We begin recalling the following definiton.
\begin{defn}\label{defn:regular}
A {\it regular} ring is a
 commutative noetherian ring such that in the localization at every prime ideal, the Krull dimension of the maximal ideal is equal to the cardinal of a minimal set of generators.
\end{defn}
Examples of regular rings include fields (of dimension zero) and Dedekind domains. If $R$ is regular then so is the polynomial ring $R[x]$, with dimension one greater than that of $R$.

 Before stating our results, we also need to recall the \emph{Bass-Heller-Swan decomposition} in $K$-theory, which permits to {decompose} the $K$-theory of $R[\Z]$. For thorough approaches to algebraic $K$-theory the reader is referred to \cite{Bas68}, \cite{Wei13} or \cite{Luc21}.

\begin{thm}[\cite{Sri91}, Theorem 9.8]
\label{BHS}
Given a ring $R$ and $q\in\Z$, there exists an isomorphism $$K_q(R[\Z])\simeq K_q(R)\oplus K_{q-1}(R)\oplus NK_q(R)\oplus NK_q(R),$$ which is natural in the ring $R$.
\end{thm}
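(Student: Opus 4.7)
The plan is to derive the decomposition by combining two classical inputs: a split surjection induced by evaluation on a polynomial extension, and Bass's fundamental theorem of algebraic $K$-theory for the Laurent extension $R\to R[\Z ]=R[t,t^{-1}]$.

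First I would establish the auxiliary splitting
$$K_q(R[t])\cong K_q(R)\oplus NK_q(R),$$
where $NK_q(R)$ is defined as the kernel of the evaluation map $\varepsilon_*\colon K_q(R[t])\to K_q(R)$ at $t=0$. This is immediate because the inclusion $R\hookrightarrow R[t]$ is a section of $\varepsilon$, so $\varepsilon_*$ admits a functorial section and the kernel is a natural direct summand. Applying the same argument to the retraction $R[t^{-1}]\to R$ yields $K_q(R[t^{-1}])\cong K_q(R)\oplus NK_q(R)$, producing the second copy of $NK_q(R)$ that appears in the final formula.

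Second, I would invoke Bass's fundamental theorem, which produces a natural short exact sequence
$$0\to K_q(R)\to K_q(R[t])\oplus K_q(R[t^{-1}])\to K_q(R[\Z ])\to K_{q-1}(R)\to 0,$$
where the leftmost map is the difference of the two inclusions and the rightmost surjection is split functorially by cup product with the unit class $[t]\in K_1(R[\Z ])$. Substituting the two polynomial decompositions into the middle term gives $K_q(R)^{\oplus 2}\oplus NK_q(R)^{\oplus 2}$; the diagonal copy of $K_q(R)$ is killed, so the image inside $K_q(R[\Z ])$ contributes $K_q(R)\oplus NK_q(R)^{\oplus 2}$, and the split quotient supplies the remaining $K_{q-1}(R)$. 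Naturality in $R$ is inherited componentwise, since evaluation, inclusion, the connecting homomorphism and the cup product are all natural constructions.

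The hard part is the fundamental theorem itself, which has no short proof: for low $q$ it can be handled via Bass's original treatment using projective modules and the Mayer--Vietoris sequence of the localization $R[t]\to R[\Z ]$, while for higher $q$ one needs Quillen's machinery, notably the projective bundle theorem applied to a filtration by degree, as carried out in detail by Grayson. A second, more bookkeeping-flavoured obstacle is to verify that the two copies of $NK_q(R)$ remain genuinely independent after assembly rather than being identified: this requires tracking the images of the $NK_q$-summands coming from $R[t]$ and from $R[t^{-1}]$ separately and checking that their span inside $K_q(R[\Z ])$ is internally direct.
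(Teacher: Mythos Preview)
The paper does not prove this statement at all: it is quoted verbatim as \cite[Theorem 9.8]{Sri91} and used as a black box throughout Section~\ref{Sect:BredonArtin}. So there is no ``paper's own proof'' to compare against; your outline stands on its own.

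That said, your sketch is the standard textbook route (essentially the one in Srinivas or in Weibel's \emph{K-book}): split off $NK_q$ from the polynomial extensions via the evaluation retraction, then feed both polynomial pieces into the fundamental theorem for the Laurent extension. One small caution on presentation: the four-term exact sequence you wrote is not the primitive object in the higher-$K$-theory proof. What one actually has is Quillen's localization sequence for $R[t]\to R[t,t^{-1}]$, which after d\'evissage on the $t$-torsion category reads
\[
\cdots \longrightarrow K_q(R)\longrightarrow K_q(R[t])\longrightarrow K_q(R[t,t^{-1}])\longrightarrow K_{q-1}(R)\longrightarrow\cdots,
\]
and the splitting by cup product with $[t]\in K_1(R[t,t^{-1}])$ breaks this long exact sequence into split short exact sequences. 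Your four-term sequence with the direct sum $K_q(R[t])\oplus K_q(R[t^{-1}])$ in the middle is a repackaging of this together with the analogous sequence for $t^{-1}$; it is fine as a summary, but if you were writing this out in full you would want to derive it from the localization sequence rather than assert it directly. The honesty in your final paragraph---that the real work is Quillen's projective line argument (or Grayson's version)---is exactly right, and is why the authors simply cite the result.
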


{The additional terms $NK_q(R)$ are called the Nil-terms, and $NK_n(R)$ is defined as the kernel of the homomorphism induced in $K_q$ by the homomorphism $R[t]\rightarrow R$ which sends $t$ to $1$. These terms vanish for a regular ring $R$, see \cite[Section 9]{Sri91}.}

{In our computations it will be important to understand the endomorphism $ind_n$ of $K_q(R[\Z])$ induced by multiplication by $n$ in $\Z$. The references for the sequel are \cite[Section 2]{HaLu12}, \cite{Sti82} and \cite{Wei81}.}

According to Bass-Heller-Swan-decomposition, $ind_n$ can be seen as a homomorphism
$$ind_n\colon K_q(R)\oplus K_{q-1}(R)\oplus NK_q(R)\oplus NK_q(R)\rightarrow K_q(R)\oplus K_{q-1}(R)\oplus NK_q(R)\oplus NK_q(R).$$
\noindent{Here, by naturality of the decomposition, the image of $ind_n|_{K_q(R)}$ lies inside $K_q(R)$, the image of $ind_n|_{K_{q-1}(R)}$ lies inside $K_{q-1}(R)$ and analogously for the Nil-terms. Now, the restriction of  $ind_n$ to $K_q(R)$ is the identity and the restriction of  $ind_n$ to $K_{q-1}(R)$ is multiplication by $n$.  By Farrell's trick \cite{Far77}, $ind_n$ admits a transfer $res_n$ such that $res_n\circ ind_n$ is multiplication by $n$.
This implies the following:}

{\begin{prop} \label{kernel}
In the previous notation, the kernel of $ind_n$ is isomorphic to a direct sum $T_1(K_q(R))\oplus T_2(K_q(R))$, where $T_1(K_q(R))$ is the $n$-torsion subgroup of $K_{q-1}(R)$ and $T_2(K_q(R))$ is a subgroup of the  $n$-torsion subgroup of $NK_q(R)\oplus NK_q(R).$
\end{prop}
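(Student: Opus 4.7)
The plan is to use the naturality of the Bass--Heller--Swan decomposition (Theorem \ref{BHS}) to reduce the computation of $\ker (ind_n)$ to a summand-by-summand analysis. Since the endomorphism $ind_n$ of $K_q(R[\Z])$ is induced by the ring homomorphism $R[\Z]\to R[\Z]$ sending a generator $t$ of $\Z$ to $t^n$, and the Bass--Heller--Swan isomorphism is natural in ring maps, $ind_n$ respects the direct sum decomposition. Thus it suffices to compute $ind_n$ restricted to each of the four summands.

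First, I would observe that the inclusion of $K_q(R)$ into $K_q(R[\Z])$ factors through the unit map $R\hookrightarrow R[\Z]$, which is unchanged by the substitution $t\mapsto t^n$. Consequently $ind_n|_{K_q(R)}$ is the identity, contributing $0$ to the kernel. Second, the inclusion of $K_{q-1}(R)$ into $K_q(R[\Z])$ is realised by cup product with the class $[t]\in K_1(\Z[\Z])$; since $t\mapsto t^n$ sends $[t]$ to $[t^n]=n[t]$ in $K_1$, the restriction $ind_n|_{K_{q-1}(R)}$ equals multiplication by $n$. Its kernel is therefore the $n$-torsion subgroup of $K_{q-1}(R)$, which we name $T_1(K_q(R))$.

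Third, I would apply Farrell's trick: the transfer $res_n$ satisfies $res_n\circ ind_n = n\cdot\mathrm{id}$, so every element of $\ker(ind_n)$ is annihilated by $n$. Restricting this observation to the two Nil-summands, we obtain that $\ker(ind_n)\cap \bigl(NK_q(R)\oplus NK_q(R)\bigr)$ is a subgroup of the $n$-torsion of $NK_q(R)\oplus NK_q(R)$; call this subgroup $T_2(K_q(R))$. Assembling the four computations, we arrive at $\ker(ind_n)\simeq T_1(K_q(R))\oplus T_2(K_q(R))$, as claimed.

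The main obstacle is justifying the splitting of $ind_n$ across the Bass--Heller--Swan summands, since the decomposition is natural only with respect to \emph{ring} maps; this is satisfied here because $t\mapsto t^n$ is a genuine ring endomorphism of $R[\Z]$. The identification of $ind_n$ with multiplication by $n$ on the $K_{q-1}(R)$ factor is the key computational input, and it follows from the classical description of the Bass--Heller--Swan embedding via cup product with $[t]$; a self-contained alternative would be to invoke \cite[Section 2]{HaLu12} or \cite{Wei81}, where this formula for $ind_n$ is stated explicitly. The reason we cannot pin down $T_2(K_q(R))$ more precisely is that the action of $ind_n$ on the Nil-terms is not simply multiplication by $n$, and its exact form depends on the ring $R$; Farrell's trick only gives us containment in the $n$-torsion, which is all that the statement requires.
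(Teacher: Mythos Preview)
Your proof is correct and follows essentially the same approach as the paper: the argument there is given in the paragraph immediately preceding the proposition, and it uses exactly the three ingredients you invoke---naturality of the Bass--Heller--Swan decomposition to split $ind_n$ across summands, the identifications of $ind_n$ as the identity on $K_q(R)$ and as multiplication by $n$ on $K_{q-1}(R)$, and Farrell's transfer $res_n$ with $res_n\circ ind_n=n\cdot\mathrm{id}$ to force the Nil contribution into the $n$-torsion. Your justifications via the unit map and via cup product with $[t]$ are a welcome expansion of what the paper leaves implicit, and your closing remark correctly flags that the summand-preservation requires more than the naturality \emph{in $R$} stated in Theorem~\ref{BHS}; both you and the paper ultimately defer this point to \cite{HaLu12}, \cite{Sti82}, \cite{Wei81}.
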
}

{It should be remarked that the restriction of $ind_n|_{NK_q(R)}$ (sometimes call the \emph{Frobenius map} in the literature) is related to the action of big Witt vectors in $NK_q(R)$, and is hard to describe in general. Anyhow, the previous proposition will be important when computing the Bredon homology of Artin groups of dihedral type.}

The following  propositions
{particularize} our previous results when the coefficient module is $K$-theory.

 For the sake of clarity, we maintain the notation from Proposition \ref{mainMV}, although we are aware that sometimes can be found redundant.

\begin{prop}
\label{Anordinary}

Let us consider an Artin group $A_n$ of dihedral type, $q\in\mathbb{Z}$, $R$ a ring. Then we have:

\begin{itemize}

\item $H_0^{Fin}(A_n,K_q(R[-]))=K_q(R)$.

\item $H_1^{Fin}(A_n,K_q(R[-]))=K_q(R)\oplus K_q(R)$ if $n$ is even

\item $H_1^{Fin}(A_n,K_q(R[-]))=K_q(R)$ if $n$ is odd.

\item $H_2^{Fin}(A_n,K_q(R[-]))=K_q(R)$ if $n$ is even
\item $H_2^{Fin}(A_n,K_q(R[-]))=0$ if $n$ is odd.

\item $H_i^{Fin}(A_n,K_q(R[-]))=0$ if $i\geq 3$.

\end{itemize}

\end{prop}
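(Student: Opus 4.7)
Since the Artin group $A_n$ is torsion-free (as recalled at the beginning of Section \ref{Sect:Artin}), the family $\F_{Fin}$ of finite subgroups reduces to the trivial family $\F_{\{1\}}$. Consequently $\uE A_n = EA_n$, and the Bredon chain complex $C_*^{Fin}(\uE A_n, K_q(R[-]))$ coincides, under the identification $K_q(R[\{1\}]) = K_q(R)$, with the ordinary cellular chain complex of $EA_n$ tensored with the constant coefficient group $K_q(R)$. In other words,
\[
H_i^{Fin}(A_n, K_q(R[-])) \;\cong\; H_i(A_n; K_q(R)),
\]
where the right-hand side denotes ordinary group homology with constant coefficients in the abelian group $K_q(R)$.

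With this identification in hand, the plan is to apply the Universal Coefficient Theorem to relate $H_i(A_n; K_q(R))$ to the integral homology groups $H_i(A_n)$ computed in Proposition \ref{OrdHom}. The UCT gives a short exact sequence
\[
0 \to H_i(A_n) \otimes_\Z K_q(R) \to H_i(A_n; K_q(R)) \to \mathrm{Tor}_1^\Z(H_{i-1}(A_n), K_q(R)) \to 0.
\]
Crucially, Proposition \ref{OrdHom} tells us that for both parities of $n$ every $H_j(A_n)$ is a free abelian group (a direct sum of copies of $\Z$, or zero). Hence the Tor term vanishes identically, and $H_i(A_n; K_q(R)) \cong H_i(A_n) \otimes_\Z K_q(R)$.

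Substituting the explicit values from Proposition \ref{OrdHom} then yields the seven assertions of the statement:  in all cases $H_0 \cong \Z \otimes K_q(R) = K_q(R)$; for $n$ even, $H_1 \cong \Z^2 \otimes K_q(R) = K_q(R) \oplus K_q(R)$ and $H_2 \cong K_q(R)$, while for $n$ odd, $H_1 \cong K_q(R)$ and $H_2 = 0$; and in all cases $H_i$ vanishes for $i \geq 3$ since $H_i(A_n) = 0$ there. There is no real obstacle here — the only point to verify carefully is the reduction from Bredon to ordinary homology (which is immediate from torsion-freeness and the definition of the coefficient module), after which the result is a direct bookkeeping exercise applying the universal coefficient theorem to known integral homology.
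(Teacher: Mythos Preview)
Your proof is correct and follows essentially the same approach as the paper: reduce Bredon homology to ordinary homology via torsion-freeness of $A_n$, then apply the Universal Coefficient Theorem together with Proposition \ref{OrdHom}, using that the integral homology groups are free abelian so the Tor terms vanish. Your write-up simply makes the steps more explicit than the paper's two-sentence version.
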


\begin{proof}

As $A_n$ is torsion-free, Bredon homology with respect to the family of finite groups is in fact ordinary homology. The proposition is then obtained by applying Universal Coefficient Theorem \cite[Theorem 3A.3]{Hat02} to the ordinary homology groups of $A_n$ (Proposition \ref{OrdHom}), taking into account that the latter are torsion-free and that the $K$-theory groups are abelian.
\end{proof}

We consider now {the} Bredon homology of the commensurators.

\begin{prop}
\label{FinComm}

Let us consider an Artin group $A_n$ of dihedral type, $q\in\mathbb{Z}$, $R$ a ring. Let $\Comm_{A_n}[H]$ be the commensurator of a virtually cyclic group in $A_n$. Then:

\begin{itemize}

\item If $\Comm_{A_n}[H]=\mathbb{Z}\oplus\mathbb{Z}$, then

$$H_i^{Fin\cap \Comm_{A_n}[H]}(\Comm_{A_n}[H],K_q(R[-]))=\begin{cases} K_q(R) & \text{ if $i=0,2$}\\
K_q(R)\oplus K_q(R) & \text{ if i =1}\\
0 & \text{otherwise}. \end{cases}$$

\item If $\Comm_{A_n}[H]=A_n$, then $$H_i^{Fin\cap \Comm_{A_n}[H]}(\Comm_{A_n}[H],K_q(R[-]))=H_i^{Fin}(A_n,K_q(R))$$ and this case was described in the previous proposition.

\end{itemize}

\end{prop}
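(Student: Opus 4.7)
The plan is to handle the two cases of the proposition separately, reducing each to ordinary homology computations with twisted coefficients.

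For the second case, where $\Comm_{A_n}[H] = A_n$, there is essentially nothing to prove: the family $Fin\cap \Comm_{A_n}[H]$ is just the family $Fin$ of finite subgroups of $A_n$, and the Bredon module $K_q(R[-])$ restricts to the same module on the corresponding orbit category. Thus the claim is a tautology given Proposition \ref{Anordinary}.

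For the first case, where $\Comm_{A_n}[H] \cong \mathbb{Z}\oplus\mathbb{Z}$, I would first observe that $\mathbb{Z}\oplus\mathbb{Z}$ is torsion-free, so the family $Fin\cap \Comm_{A_n}[H]$ consists only of the trivial subgroup. Consequently Bredon homology with respect to this family coincides with ordinary group homology with coefficients in the abelian group $K_q(R)$ (viewed as a trivial module, i.e.\ the value of the functor on the single orbit $\Comm_{A_n}[H]/\{1\}$). A standard model for $\underline{E}(\mathbb{Z}\oplus\mathbb{Z})$ is $\R^2$ with the translation action, so the underlying chain complex is the cellular chain complex of the $2$-torus $T^2 = K(\mathbb{Z}^2,1)$.

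The ordinary integral homology of $T^2$ is $H_0 = \mathbb{Z}$, $H_1 = \mathbb{Z}\oplus\mathbb{Z}$, $H_2 = \mathbb{Z}$ and vanishes in higher degrees; all of these groups are free abelian. Applying the Universal Coefficient Theorem \cite[Theorem 3A.3]{Hat02} with coefficients in $K_q(R)$, the $\Tor$ terms vanish, and we obtain exactly
\[
H_i^{Fin\cap \Comm_{A_n}[H]}(\Comm_{A_n}[H],K_q(R[-])) \cong H_i(T^2)\otimes_{\mathbb{Z}} K_q(R),
\]
yielding $K_q(R)$ in degrees $0$ and $2$, $K_q(R)\oplus K_q(R)$ in degree $1$, and zero otherwise, as claimed.

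There is no serious obstacle; the only subtlety is recognising that for a torsion-free group the Bredon homology with respect to $Fin$ collapses to ordinary homology with untwisted coefficients in the value of the Bredon module on the trivial subgroup, which is $K_q(R[\{1\}]) = K_q(R)$. Once this reduction is made, the computation is a direct application of the Universal Coefficient Theorem to the homology of $\mathbb{Z}^2$.
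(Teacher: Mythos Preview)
Your proof is correct and follows essentially the same approach as the paper: since the commensurators are torsion-free, Bredon homology with respect to $Fin\cap\Comm_{A_n}[H]$ reduces to ordinary homology, and the result is then obtained via the Universal Coefficient Theorem applied to the (free abelian) integral homology of $\Z^2$ and of $A_n$. The paper's proof is just a two-line version of what you wrote.
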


\begin{proof}

Given a commensurator $\textrm{Comm}_{A_n}[H]$, its Bredon homology with respect to the family $Fin\cap \textrm{Comm}_{A_n}[H]$ is its ordinary homology. Then, {as in the previous proposition}, the result follows by applying the Universal Coefficient Theorem to the ordinary homology groups of the commensurators.
\end{proof}

We now undertake the remaining case. In item (3) we follow the notation of Proposition \ref{kernel}.
Moreover, we use the following notation.
\begin{nt}\label{not: CKR}
For $n$ odd, we denote by $C(K_q(R))$ the cokernel of the homomorphism $$\tilde{f}\colon K_q(R[\Z])\to K_q(R[\Z])\oplus K_q(R[\Z])$$ induced by
$f\colon R[\Z]\rightarrow R[\Z]\oplus  R[\Z]$ induced in the first component by multiplication by 2 and in the second by multiplication by $n=2k+1$.
\end{nt}
By Bass-Heller-Swan decomposition, $C(K_q(R))$ this is a quotient of $K_q(R)\oplus K_q(R)\bigoplus(\oplus_{i=1}^2 K_{q-1}(R))\bigoplus (\oplus_{i=1}^4 NK_q(R))$. Moreover, $\tilde{f}$ restricts to the component corresponding to  $K_q(R)$ of the Bass-Heller-Swan decomposition of $K_q(R[\Z])$ to the diagonal map to $K_q(R)\oplus K_q(R)$, a component  of $K_q(R[\Z])\oplus K_q(R[\Z])$.
Thus, $C(K_q(R))$ can be viewed as a quotient of $K_q(R)\bigoplus(\oplus_{i=1}^2 K_{q-1}(R))\bigoplus (\oplus_{i=1}^4NK_q(R))$ that can be identified in many cases, see Section \ref{Sect:concrete}.

\begin{prop}
\label{FHcomm}
Let $A_n$  be an Artin group of dihedral type, $q\in\mathbb{Z}$, $R$ a ring.
Consider for every non-trivial virtually cyclic $H\leqslant A_n$ and the family $\mathcal{F}[H]$. Then:

\begin{enumerate}

\item If $i\geq 2$, then $$H_i^{\mathcal{F}[H]}(\Comm_{A_n}[H],K_q(R[-]))=0.$$

\item If $H\cap Z(A_n)$ is trivial, then $$H_0^{\mathcal{F}[H]}(\Comm_{A_n}[H],K_q(R[-]))=H_1^{\mathcal{F}[H]}(\Comm_{A_n}[H],K_q(R[-]))=K_q(R[\mathbb{Z}]).$$

\item If $n$ is odd and $\Comm_{A_n}[H]=A_n$, then $$H_0^{\mathcal{F}[H]}(\Comm_{A_n}[H],K_q(R[-]))=C(K_q(R))$$ and $$H_1^{\mathcal{F}[H]}(\Comm_{A_n}[H],K_q(R[-]))=T_1(K_q(R))\oplus T_2(K_q(R)).$$

\item If $n$ is even and $\Comm_{A_n}[H]=A_n$, then $$H_0^{\mathcal{F}[H]}(\Comm_{A_n}[H],K_q(R[-]))=H_1^{\mathcal{F}[H]}(\Comm_{A_n}[H],K_q(R[-]))=K_q(R[\mathbb{Z}]).$$

\end{enumerate}

\end{prop}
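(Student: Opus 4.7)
The plan is to specialize the general results of Subsection \ref{HiF[H]} to the coefficient module $M=K_q(R[-])$, which evaluates the coset $G/K$ as $K_q(R[K])$.

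First, part (1) is immediate from Corollary \ref{zerohomology}. Part (2) follows from Proposition \ref{homology0and1}(i): one gets $H_0^{\mathcal{F}[H]}(\Comm_{A_n}[H],K_q(R[-]))=H_1^{\mathcal{F}[H]}(\Comm_{A_n}[H],K_q(R[-]))=M(\Comm_{A_n}[H]/H)=K_q(R[H])$, and as $H$ is infinite cyclic this equals $K_q(R[\mathbb{Z}])$. Part (4) follows from Proposition \ref{homology0and1}(iii): both $Z(A_n)$ and $\langle ab\rangle$ are infinite cyclic subgroups of $A_n$, so $M(A_n/Z(A_n))$ and $M(A_n/\langle ab\rangle)$ each evaluate as $K_q(R[\mathbb{Z}])$.

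The substantive case is part (3). By Proposition \ref{homology0and1}(ii) applied to $M=K_q(R[-])$, both $H_0^{\mathcal{F}[H]}$ and $H_1^{\mathcal{F}[H]}$ are governed by the map
\[ f\colon M(A_n/Z(A_n))\longrightarrow M(A_n/\langle b(ab)^k\rangle)\oplus M(A_n/\langle ab\rangle),\]
induced by the equivariant projections onto these coset spaces. All three cosets have infinite cyclic stabilizer, so each term is $K_q(R[\mathbb{Z}])$. Moreover, the inclusions $Z(A_n)\hookrightarrow \langle b(ab)^k\rangle$ and $Z(A_n)\hookrightarrow \langle ab\rangle$ have indices $2$ and $n=2k+1$ respectively, so functoriality of $K_q(R[-])$ identifies the two components of $f$ with the induction homomorphisms $\mathrm{ind}_2$ and $\mathrm{ind}_n$ of Subsection \ref{Ktheory}. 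By Notation \ref{not: CKR}, the cokernel of $f$ is exactly $C(K_q(R))$, yielding the claim for $H_0^{\mathcal{F}[H]}$.

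For $\ker f$, I would decompose each copy of $K_q(R[\mathbb{Z}])$ via Bass-Heller-Swan (Theorem \ref{BHS}) and analyze each summand separately, using that naturality makes $\mathrm{ind}_m$ preserve the decomposition and act as the identity on $K_q(R)$, multiplication by $m$ on $K_{q-1}(R)$, and a Frobenius-type endomorphism on the Nil-summands. The $K_q(R)$ summand contributes nothing to $\ker f$; on $K_{q-1}(R)$ and on the Nil-summands the joint kernel pieces are the groups $T_1(K_q(R))$ and $T_2(K_q(R))$ supplied by the analysis of Proposition \ref{kernel}, yielding the asserted formula. The main obstacle is the Frobenius action on $NK_q(R)$, which admits no explicit closed form in general; however, Proposition \ref{kernel} is designed precisely to absorb this into the notation $T_2(K_q(R))$, so the computation reduces to splicing together the summand-wise behavior of $\mathrm{ind}_2$ and $\mathrm{ind}_n$.
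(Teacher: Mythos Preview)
Your proposal is correct and follows essentially the same route as the paper. In both arguments, items (1), (2), and (4) are read off directly from Corollary \ref{zerohomology} and Proposition \ref{homology0and1}(i),(iii), while item (3) amounts to identifying the map $f$ of Proposition \ref{homology0and1}(ii) with the pair $(\mathrm{ind}_2,\mathrm{ind}_n)$ on $K_q(R[\mathbb{Z}])$ and then invoking Notation \ref{not: CKR} for the cokernel and Proposition \ref{kernel} for the kernel; your Bass--Heller--Swan summand-by-summand discussion just makes explicit what the paper compresses into a single appeal to Proposition \ref{kernel}.
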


\begin{proof}

We will check every item separately.

\begin{enumerate}

\item It follows straightforward from Corollary \ref{zerohomology}.

\item The claim follows from item (i) in Proposition \ref{homology0and1}, taking into account that $H$ is infinite cyclic an then $K_q(R[H])=K_q(R[\mathbb{Z}])$.

\item Let $n=2k+1$. Consider the homomorphism $f\colon M(A_{n}/Z(A_n))\to M(A_n/\langle b(ab)^k\rangle ) \oplus M(A_n/\langle ab \rangle)$ defined in item (2) of Proposition \ref{homology0and1}.
Taking $M(A_n/-)=K_q(R[-])$, we obtain a homomorphism $f_K:K_q(R[Z(A_n)])\to K_q(R[\langle b(ab)^k\rangle])\oplus K_q(R[\langle ab \rangle])$.
Observe that the two components of $f_K$ are respectively induced by the inclusions $Z(A_n)\hookrightarrow \langle b(ab)^k\rangle$ and $Z(A_n)\hookrightarrow \langle ab \rangle$, which are both inclusions $\Z\hookrightarrow \Z$ given respectively by multiplication by 2 and multiplication by $n$.
Then, $f_K$ can be seen as the homomorphism $K_q(R[\Z])\rightarrow K_q(R[\Z])\oplus K_q(R[\Z])$ induced by each multiplication in the corresponding component.
According to Proposition \ref{kernel}, the kernel of the first component of this homomorphism is equal to $T_1(K_q(R))\oplus T_2(K_q(R))$, while its cokernel is $C(K_q(R))$ by definition.
The result now follows from item (2) in Proposition \ref{homology0and1}.

\item  In this case $A_n=\Comm_{A_n}[H]$. The claim follows from item (3) in Proposition \ref{homology0and1}, taking into account that $H$ is infinite cyclic and  then $K_q(R[H])=K_q(R[\mathbb{Z}])$.
\end{enumerate}
\end{proof}

\subsection{Understanding the homomorphisms of Proposition \ref{mainMV}}
\label{morphism}
In our way to describe the Bredon homology of $A_n$ with respect to the family of virtually cyclic groups, we should describe to some extent the homomorphisms that appear in the Mayer-Vietoris sequence of Proposition \ref{mainMV}.

 In the following we will use without explicit mention the previous three propositions, which identify the terms of the exact sequence. We also maintain the name of the homomorphisms in the sequence.
Note that the superscript of the homomorphism $g^k_i$ specifies the degree of the homology in the source of $g^k_i$.
Moreover,  when we need to refer to the $j$-th component of $g_i^k$, we will write $g_{ij}^k$.
For example, $g_{22}^1$ is the second component of the homomorphism $g_2^1$ defined over the first homology group.

To prove our results, we need to analyze in detail the following homomorphism
\begin{equation}\label{g2}
\bigoplus_{[H]}H_i^{Fin\cap \Comm_{A_n}[H]}(\Comm_{A_n}[H],M) \stackrel{g_2^i}{\rightarrow} \end{equation}
$$ \stackrel{g_2^i}{\rightarrow}\left(\bigoplus_{[H]}H_i^{\mathcal{F}[H]}(\Comm_{A_n}[H],M)\right)\oplus H_i^{Fin}(A_n,M).
$$

The homomorphism $g_{21}^i$ is induced by  the  vertical left arrow $\sqcup_{[H]\in I}\mathrm{id}_{A_n}\times_{\Comm_{A_n}[H]}f_{[H]}$ of L\"{u}ck-Weiermann push-out  \eqref{eq:Luck-Weiermann} below (see also Theorem \ref{maintheorem}).
In turn, the homomorphism $g_{22}^i$ is induced by the inclusion in the upper horizontal arrow of the push-out  \eqref{eq:Luck-Weiermann}
\begin{equation}
\label{eq:Luck-Weiermann}
\xymatrix{  \coprod_{[H]\in I}A_n\times_{\Comm_{A_n}[H]}\uE \Comm_{A_n}[H] \ar[r]^{\hspace{2cm} i} \ar[d]^{\coprod_{[H]\in I}id_{A_n}\times_{\Comm_{A_n}[H]}f_{[H]}}  & \uE A_n \ar[d] \\
\coprod_{[H]\in I}A_n\times_{\Comm_{A_n}[H]}E_{\F [H]}\Comm_{A_n}[H]  \ar[r] & X. }
\end{equation}

We will decompose the homomorphism $g_2^i$ of Equation \eqref{g2} into homomorphisms $g_{2[H]}^i$ which are the restriction of $g_2^i$ to the factor of the domain corresponding to $[H]$. We will write

\begin{equation}
\label{eq:g}
g_{2[H]}^i\colon H_i^{Fin\cap \Comm_{A_n}[H]}(\Comm_{A_n}[H],M)\to H_i^{\mathcal{F}[H]}(\Comm_{A_n}[H],M)\oplus H_i^{Fin}(A_n,M).
\end{equation}
Note that we are making a slight abuse of notation, as the real codomain of $g_{2[H]}^i$ is the same as {that of} $g_2^i$, but we have chosen to write only the subgroup where the image of $g_{2[H]}^i$ lies.
Moreover, when needed, we will further decompose $g^i_{2[H]}$ into $g^i_{21[H]}\oplus g^i_{22[H]}$ indicating the different factors of the image of $g^i_{2[H]}$. Observe that this notation is coherent with the previous one.

As all the commensurators are torsion-free, $H_i^{Fin\cap \Comm_{A_n}[H]}(\Comm_{A_n}[H],M)$ is ordinary homology with coefficients in $M(\Comm_{A_n}[H]/\{1\})$.


We now analyze the homomorphism $g_2^i$ on the case $M=K_q(R[-])$. Before that, we introduce the following notation.
\begin{nt}
We will denote
$N_q^{[H]} =K_{q-1}(R)\oplus NK_q(R)\oplus NK_q(R)$, where the superindex means that this group is associated to a concrete commensurability class $[H]$.
\end{nt}
We now will give more detailed descriptions of the homomorphism $g_2^i$ and the cokernels introduced above.

\subsubsection{{\bf The homomorphism $g_2^2$ when $n$ odd:}}
\label{NqH}
In this case we have that the codomain of $g_2^2$ is  $(\bigoplus H_2^{\mathcal{F}[H]}(\Comm_{A_n}[H],M))\oplus H_2^{Fin}(A_n,M)=\{0\}$, and therefore for all commensurability classes of infinite  cyclic subgroups $[H]$ the homomorphisms $g_{2[H]}^2$ are trivial.

\subsubsection{{\bf The homomorphism $g_2^2$ when $n$ even:}}
In this case we have that the codomain of $g_2^2$ is  $(\bigoplus H_2^{\mathcal{F}[H]}(\Comm_{A_n}[H],M))\oplus H_2^{Fin}(A_n,M)=(\bigoplus_{[H]}\{0\})\oplus K_q(R)$.
Moreover, by Proposition \ref{Anordinary} and Proposition \ref{FinComm},
the domain of all $g_{2[H]}^2$ are the same, {and} we have
$$g_{2[H]}^2 \colon K_q(R)\to \{0\}\oplus K_q(R).$$
Note that $g_{22[H]}^2$ is induced by the inclusion given by the upper arrow in the push-out.
When $H=Z(A_n)$, we have that $\Comm_{A_n}[H]=A_n$ and therefore this inclusion is the identity.
Since $g_{22[Z(A_n)]}^2$ is surjective, we have that $g_2^2$ is surjective.

\subsubsection{{\bf The homomorphism $g_2^1$ when $n$ is odd:}}
\label{g21}
{Before we describe this case, we need to make some considerations.
Recall that we assume that $H$ is normal in $\textrm{Comm}_{A_n}[H]$.
Observe that in the models described in Section \ref{HiF[H]} for $\underline{E}(\textrm{Comm}_{A_n} [H]/H)$, the stabilizers of the edges are trivial for every $H$.
This means that if we consider the space $\underline{E}(\textrm{Comm}_{A_n} [H]/H)$ as a model for $E_{\F [H]}\Comm_{A_n}[H]$ (see Proposition \ref{induced}) the stabilizers of the edges are always isomorphic to $H$.
On the other hand, in any model of $\underline{E}\textrm{Comm}_{A_n} [H]$ the stabilizer of the edges should be trivial, as the action of $\textrm{Comm}_{A_n} [H]$ is free.
Then, the homomorphism induced by $f_{[H]}:\underline{E}\textrm{Comm}_{A_n} [H]\rightarrow E_{\F [H]}\Comm_{A_n}[H]$ in the first chain group of the Bredon complex with coefficients in a module $M$ takes every copy of $M(\textrm{Comm}_{A_n} [H]/{1})$ to a copy of $M(\textrm{Comm}_{A_n}[H]/H)$ with the homomorphism induced by the inclusion of the trivial group in $H$. In particular, if $M=K_q(R[-])$ for some $q$, the corresponding homomorphism $K_q(R)\rightarrow K_q(R[\mathbb{Z}])$ is given by the inclusion of $K_q(R)$ in the corresponding piece of the Bass-Heller-Swan decomposition of $K_q(R[\mathbb{Z}])$. This fact will be very useful in the sequel}.

In the following we use the notation of Proposition \ref{kernel} when needed.
We have that $$g_{2[Z(A_n)]}^1\colon K_q(R)\to T_1(K_q(R))\oplus T_2(K_q(R))\oplus K_q(R)$$ and for $H$ nontrivial, $[H]\neq [Z(A_n)]$
$$g_{2[H]}^1\colon  K_q(R)\oplus K_q(R)\to   K_q(R[\Z]) \oplus K_q(R).$$

Note that there are infinitely many commensurability classes of infinity cyclic subgroups different from the $[Z(A_n)]$ and hence infinitely many terms of this  kind.
We now examine the cases $[H]\neq [Z(A_n)]$ and $[H]=[Z(A_n)]$ separately.

If $[H]=[Z(A_n)]$ then we have $\Comm_{A_n}[H]=A_n$.

 In order to describe $g^{1}_{21[Z(A_n)]}$, consider the homomorphism $$C_1^{Fin}(\underline{E}\textrm{Comm}_{A_n}[H],K_q)\rightarrow C_1^{\F [H]}(E_{\F [H]}\Comm_{A_n}[H],K_q)$$ at the level of Bredon chains that induces $g^{1}_{21[Z(A_n)]}$ in homology. According to the previous considerations, this homomorphism is given by the inclusion $i:K_q(R)\hookrightarrow K_q(R[H])=K_q(R[\Z])$ given by Bass-Heller-Swan decomposition. But by item (3) of Proposition \ref{FHcomm}, the image of this homomorphism is trivial in $H_1^{\F [H]}(E_{\F [H]}\Comm_{A_n}[H],K_q)$, and hence $g^{1}_{21[Z(A_n)]}$ is also trivial. In turn, $g^{1}_{22[Z(A_n)]}$ is the identity.

If $[H]\neq[Z(A_n)]$ then $\Comm_{A_n}[H]=\Z^2$, and $\Comm_{A_n}[H]/H$ is isomorphic to $\Z$.
In this case $g_{21 [H]}^1$ is given by {a} homomorphism $K_q(R)\oplus K_q(R)\rightarrow K_q(R[\Z])$, where we assume that the first component of the domain corresponds to $H$ and the second to $Z(A_n)$. As the homomorphism $g_{21 [H]}^1$ is induced in homology by the quotient homomorphism $\textrm{Comm}_{A_n}[H]\rightarrow \textrm{Comm}_{A_n}[H]/H$, the previous results imply that the first component of $g_{21 [H]}^1$ is trivial, while the second, which corresponds to the center, identifies the copy of $K_q(R)$ in the Bass-Heller-Swan decomposition of $K_q(R[\Z])$.

 On the other hand,  $g_{22[H]}^1\colon (\Z\oplus\Z)\otimes K_q(R)\rightarrow \Z\otimes K_q(R)$  is defined by the abelianization of the inclusion $H_1(\Comm_{A_n}[H])\rightarrow H_1(A_n)$ in the first component of the tensor product and by the identity in the second.

Now since the image of $g^1_{2[Z(A_n)]}$ is precisely given by the copy of $K_q(R)$ that corresponds to $H_1(A_n,K_q)$, the previous computations imply that the cokernel of $g^1_2$ is then equal to $(\bigoplus_{[H]\neq [Z(A_n)]}N_q^{[H]})\oplus T_1(K_q(R))\oplus T_2(K_q(R))$.

\subsubsection{ {\bf The homomorphism $g_2^1$ when $n$ is even:}}

This homomomorphism is defined in equation \eqref{eq:g} and, according to Propositions {\ref{Anordinary}, \ref{FinComm} and \ref{FHcomm}}, its first component is given by
$$g_{2[H]}^1 \colon K_q(R)\oplus K_q(R)\to K_q(R[\Z]) \oplus ( K_q(R)\oplus K_q(R))$$
for every commensurability class $[H]$. Let us describe this component with more detail.

We consider first the case $[H] \neq [Z(A_n)]$. Here, the same argument as in the odd case proves that the component of $g_{21[H]}^1$ given by the center is the inclusion $K_q(R)\hookrightarrow K_q(R[\Z])$ via Bass-Heller-Swan decomposition, and the other component is trivial.

Also when $[H]\neq [Z(A_n)]$ the homomorphism $g_{22[H]}^1$ is identified (via Proposition \ref{FinComm} and the Universal Coefficient Theorem) with a homomorphism $(\Z\oplus \Z) \otimes K_q(R)\rightarrow (\Z\oplus\Z)\otimes K_q(R)$, which comes, as above, from tensoring with $K_q(R)$ the homomomorphism $H_1(\Comm_{A_n}[H])\rightarrow H_1(A_n)$ given by abelianization of the inclusion of the commensurator in $A_n$.

Now we consider $[H]=[Z(A_n)]$. As in the previous case, the homomorphism $g_{21[H]}^1$ can be described as:
$$H_1^{Fin}(A_n,K_q(-))\to H_1^{\F [H]}(A_n,\pi^{-1} K_q(-)) \cong H_1^{Fin}(A_n/Z(A_n),\pi^{-1} K_q(-)).$$
Now recall from Proposition \ref{homology0and1} that
$$H_1^{Fin}(A_n/Z(A_n),\pi^{-1} K_q(-))=\pi^{-1}K_q(R[\{1\}])=K_q(R[Z(A_n)])=K_q(R[\Z]),$$
where $Z(A_n)$ is interpreted here as the stabilizer of the unique $A_n$-class of edges in the model of $E_{\F [H]}A_n$ described in Section \ref{HiF[H]}.
On the other hand, the two copies of $K_q(R)$ in $H_1(A_n,K_q(R))$ come from taking values of the module $K_q(R[-])$ on the trivial group, interpreted as the stabilizer of two different $A_n$-classes of edges in a model of $EA_n$. Again taking into account our previous considerations about stabilizers, the two components of $g_{21[Z(A_n)]}^1 \colon K_q(R)\oplus K_q(R)\to K_q(R[\Z])$ induce inclusion of the $K_0(R)$ via Bass-Heller-Swan decomposition. On the other hand, it is clear that $g_{22[Z(A_n)]}^1$ is the identity.

As $H_1(A_n)$ is a free abelian group of rank 2 generated by the images of $a$ and $b$ under abelianization, we may assume that the two copies of $K_q(R)$ in the image of $g_{2[H]}^1$ correspond respectively to these two copies of $\Z$, after tensoring with $K_q(R)$. Now observe that if $H=\langle a\rangle$, the image of the restriction of  $g_{22[H]}^1$ to the homology of its commensurator is exactly the first of the two copies of $K_q(R)$, while the image of the restriction of $g_{22[H]}^1$ to the homology of the commensurator of $H=\langle b\rangle$ is the other copy. As the restrictions of $g_{21[H]}^1$ to the homology of these commensurators are trivial, we obtain that $\{0\}\oplus K_q(R)\oplus K_q(R)$ lies in the image of $g_{2[H]}^1$. In fact, the description of $g_{21[H]}^1$ for $H=Z(A_n)$ implies that the copy of $K_q(R)$ inside $K_q(R[\Z])$ is also in the image, and now it is easy to conclude that the image of $g_{2[H]}^1$ is in fact $(\bigoplus_{[H]}) K_q(R))\oplus K_q(R)\oplus K_q(R)$, corresponding the big direct sum to the copies of $K_q(R)$ included in each copy of $K_q(R[\Z])$, and the remaining two copies corresponding to the ordinary homology of $A_n$. In particular, we have that $\textrm{coker } g_2^1=\bigoplus_{[H]} N_q^{[H]}.$


\subsubsection{{\bf The homomorphism $g_2^0$ for every $n$:}}

Similar considerations to those of the beginning of Section \ref{g21} hold here.
If $x$ is a vertex of $\underline{E}\textrm{Comm}_{A_n}[H]$ such that the stabilizer of $f_{[H]}(x)$ is infinite cyclic (here $f_{[H]}$ is the function of \eqref{eq:Luck-Weiermann}), then the induced map $K_q(R)\rightarrow K_q(R[\Z])$ induces the injection in the correspondent component of Bass-Heller-Swan decomposition. This is always the case except when $n$ is odd, $H=Z(A_n)$ and $f_{[H]}(x)$ has the shape $gC_{\infty}$ if we consider $\underline{E}(\textrm{Comm}_{A_n} [H]/H)$ as a $\textrm{Comm}_{A_n} [H]/H$-complex.
This will be enough to describe $g_2^0$ to the extent we need.

We begin describing $g_{2[H]}^0$ according to the different prossibilities of $[H]$ and $n$.

When $H\neq Z(A_n)$, the results of Propositions \ref{Anordinary}, \ref{FinComm} and \ref{FHcomm} imply that $g_{2[H]}^0$ is defined in the following way:
$$g_{2[H]}^0\colon K_q(R)\to K_q(R[\Z])\oplus K_q(R).$$
By the previous considerations, the homomorphism $g_{21[H]}^0$ identifies $K_q(R)$ as the corresponding direct summand of $K_q(R[\mathbb{Z}])$ in the Bass-Heller-Swan decomposition, while $g^{0}_{22[H]}$ is induced by the inclusion $\textrm{Comm}_{A_n} [H]\hookrightarrow A_n$, counts the number of connected components of the classifying space, and then is the identity.

When $n$ is even and $H=Z(A_n)$, we have that $H_0^{\F [H]}(\textrm{Comm}_{A_n} [H],K_q(-))=K_q(R[\Z])$ by Proposition \ref{FHcomm}, and this copy of $\Z$ corresponds to a stabilizer which is isomorphic to $C_n$, a cyclic group of order $n$. Then, by the previous considerations, the homomorphism $g_{2[H]}^0$ behaves as in the case $n$ even and $H\neq Z(A_n)$.

In the case $n$ odd and $H=Z(A_n)$, again by Proposition \ref{FHcomm} we have that $ H_0^{\F [H]}(\textrm{Comm}_{A_n} [H],K_q(-))=C(K_q(R))$, where $C(K_q(R))$ was defined in Notation \ref{not: CKR}.
Then $g_{21[H]}^0$ is defined as a homomorphism
$$g_{2[H]}^0\colon K_q(R)\to C(K_q(R))\oplus K_q(R).$$
Remark from the definition of $C(K_q(R))$ that this group is the direct sum of $K_q(R)$ with quotients of $\oplus_{i=1}^2 K_{q-1}(R)$ and $\oplus_{i=1}^4 NK_q(R)$,
and that this copy of $K_q(R)$ comes from the identification of the respective copies of $K_q(R)$ that appear in the Bass-Heller-Swan decomposition of $K_q(R[\langle b(ab)^k\rangle])$ and $K_q(R[\langle ab\rangle])$.
These copies correspond respectively to the stabilizers $C_2$ and $C_{2k+1}$ in $\underline{E}(\textrm{Comm}_{A_n} [H]/H)$ (as a model for the $\textrm{Comm}_{A_n} [H]/H$-action). Then, the previous considerations about stabilizers imply that $g_{21[H]}^0$ maps this $K_0(R)$ isomorphically to the mentioned copy of itself inside $C(K_q(R))$.
In turn, $g_{22[H]}^0$ is again the identity, arguing as in the even case.

With the previous information, we can describe $\mathrm{coker}g_2^0$.

Now if $n$ is even, observe that the source of $g_2^0$ is $\oplus_{[H]} K_q(R)$ and the codomain is $(\oplus_{[H]}K_q(R[\Z]))\oplus K_q(R)$.
Moreover, the image of every $g_{2[H]}^0$ consists in a copy of $K_q(R)$ inside $K_q(R[\Z])$ (again the copy that appears in the Bass-Heller-Swan decomposition), and the copy of $K_q(R)$ that corresponds to $H_0^{Fin}(A_n,M)$, which is fixed for any choice of $H$. Hence, the cokernel of the homomorphism $g_2^0$ is isomorphic to the quotient of a direct sum of copies of $K_q(R[\Z])$ (indexed by $[H]$) by the identification of all the copies $K_q(R)$ which are the images of the homomorphisms $g^0_{22[H]}$. Observe that  the copies of $K_q(R[\Z])$ are ``glued" by the copy of $K_q(R)$ that corresponds to $H_0^{Fin}(A_n,M)$, and hence we have $$\mathrm{ coker } g_2^0=(\bigoplus_{[H]} N_q^{[H]})\oplus K_q(R).$$

When $n$ is odd, we only need to take into account that in the case of $H=Z(A_n)$ the role of $K_q(R[\Z])$ in the codomain is played by $C(K_q(R))$.
Then, using the same argument as in the previous case and denoting by $\overline{C}(K_q(R))$ the quotient of $C(K_q(R))$ under the copy of $K_q(R)$ in the Bass-Heller-Swan decomposition, we obtain that: $$\mathrm{ coker } g_2^0=(\bigoplus_{[H]\neq [Z(A_n)]} N_q^{[H]})\oplus K_q(R)\oplus\overline{C}(K_q(R)).$$

Observe that $g_2^0$ is a monomorphism for every $n$, as the inclusions of $K_q(R)$ in $K_q(R[\Z])$ and $C(K_q(R))$ are so.

\vspace{0.5cm}

Now we can describe the Bredon homology of $A_n$ with respect to the family of virtually cyclic groups.
Observe that the (co)kernels of the statement have been previously described, and that the groups $N_q^{[H]}$ were defined in Section \ref{NqH}.

\begin{thm}
\label{Thm:Bredon}
Let $A_n$ be an Artin group of dihedral type. In the previous notation, we have the following:
\begin{enumerate}
  \setlength{\itemindent}{-2em}
\item $H_i^{vc}(A_n,K_q (R[-]))=\{0\}$ for $i\geq 4$.
\item $H_3^{vc}(A_n,K_q (R[-]))=\begin{cases}\bigoplus_{[H]\neq [Z(A_n)]} K_q(R) & \text{$n$ odd}\\
\ker g_2^2 & \text{$n$ even.}
\end{cases}$

\item $H_2^{vc}(A_n,K_q (R[-]))= \ker g_2^1$.

\item $H_1^{vc}(A_n,K_q (R[-]))= \emph{coker } g_2^1= \begin{cases}(\bigoplus_{[H]\neq [Z(A_n)]}N_q^{[H]})\oplus T_1(K_q(R))\oplus T_2(K_q(R)) & \text{$n$ odd}\\
\bigoplus_{[H]} N_q^{[H]}
  & \text{$n$ even.}
\end{cases}$%
\item $H_0^{vc}(A_n,K_q (R[-]))= \emph{ coker } g_2^0= \begin{cases}(\bigoplus_{[H]\neq [Z(A_n)]} N_q^{[H]})\oplus K_q(R)\oplus\overline{C}(K_qR) & \text{$n$ odd}\\
(\bigoplus_{[H]} N_q^{[H]})\oplus K_q(R)
  & \text{$n$ even.}
  \end{cases}$%
\end{enumerate}
\end{thm}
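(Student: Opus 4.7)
The plan is to obtain the theorem as a direct consequence of the Mayer--Vietoris exact sequence of Proposition \ref{mainMV} applied to $M = K_q(R[-])$, feeding in the term-by-term identifications of Propositions \ref{Anordinary}, \ref{FinComm} and \ref{FHcomm}, together with the detailed descriptions of $g_2^i$ from Section \ref{morphism}. The strategy is to walk through the sequence degree by degree, truncating whenever possible using vanishing results, and reading off the remaining homology as the kernel or cokernel of an already-described map.

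First I would handle the vanishing range. For $i \geq 4$, Proposition \ref{Anordinary} gives $H_i^{Fin}(A_n,K_q(R[-])) = 0$, Corollary \ref{zerohomology} gives $H_i^{\F[H]}(\Comm_{A_n}[H],K_q(R[-])) = 0$ for all $i \geq 2$, and the ordinary homology of each commensurator (either $\Z^2$ or $A_n$) vanishes for $i \geq 3$. Plugging these into the sequence kills both flanking terms of $H_i^{vc}$, yielding (1). The same vanishing inputs, applied in degree $i=3$, produce $H_3^{vc}(A_n, K_q(R[-])) \cong \ker g_2^2$; when $n$ is odd the target of $g_2^2$ is trivial, so this kernel is the whole source $\bigoplus_{[H]\neq[Z(A_n)]} K_q(R)$, giving (2).

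For degrees $i = 0,1,2$ the key is that the flanking maps split off cleanly. In degree $2$ the Mayer--Vietoris sequence yields a short exact sequence $0 \to \mathrm{coker}\, g_2^2 \to H_2^{vc} \to \ker g_2^1 \to 0$; but $g_2^2$ is surjective (trivially in the odd case, and in the even case because the component $g_{22[Z(A_n)]}^2$ is the identity on $K_q(R) = H_2^{Fin}(A_n,K_q(R[-]))$ by the analysis in Section \ref{morphism}), so (3) follows. In degree $1$ the analogous short exact sequence is $0 \to \mathrm{coker}\, g_2^1 \to H_1^{vc} \to \ker g_2^0 \to 0$; here the explicit calculation in Section \ref{morphism} shows that $g_2^0$ is a monomorphism (the relevant $K_q(R) \hookrightarrow K_q(R[\Z])$ and $K_q(R) \hookrightarrow C(K_q(R))$ are the inclusions from Bass--Heller--Swan), so $H_1^{vc} = \mathrm{coker}\, g_2^1$ and (4) follows from the odd/even case analysis of $g_2^1$. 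Finally, (5) is immediate from right-exactness at $H_0^{vc}$, combined with the explicit cokernel description already computed in the $g_2^0$ subsection.

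The substantive work is not really in the present theorem but in the inputs: the Bass--Heller--Swan identification of $K_q(R[\Z])$ along with the effect of passing to subgroups of finite index (to describe $N_q^{[H]}$, $T_1$, $T_2$, $C(K_q(R))$ and $\overline{C}(K_q(R))$), and the careful bookkeeping of how the maps $f_{[H]}$ in the L\"{u}ck--Weiermann pushout act on Bredon chains. Once those two ingredients are in place and the surjectivity of $g_2^2$ and injectivity of $g_2^0$ are verified, the theorem is a matter of reading off the relevant kernels and cokernels from the long exact sequence; the ``main obstacle'' is thus really the injectivity/surjectivity at the two ends, since without them one would only get short exact extensions for $H_1^{vc}$ and $H_2^{vc}$ rather than the clean identifications stated.
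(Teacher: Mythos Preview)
Your proposal is correct and follows essentially the same approach as the paper's own proof: both feed Propositions \ref{Anordinary}, \ref{FinComm}, \ref{FHcomm} into the Mayer--Vietoris sequence of Proposition \ref{mainMV}, use the vanishing in high degrees for item (1), identify $H_3^{vc}$ with $\ker g_2^2$, and then use the surjectivity of $g_2^2$ and the injectivity of $g_2^0$ (both established in Section \ref{morphism}) to collapse the remaining pieces to the stated kernels and cokernels. Your phrasing via the derived short exact sequences $0\to\mathrm{coker}\,g_2^{i+1}\to H_i^{vc}\to\ker g_2^i\to 0$ is a slightly cleaner packaging of exactly the same argument the paper gives in terms of images of $g_1^i$ and $g_3^i$.
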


\begin{proof}

The proof is based on the sequence of Proposition \ref{mainMV} and the previous homological computations. We will check the five items in a separate way.

\begin{enumerate}
\item By Proposition \ref{Anordinary}, Proposition \ref{FinComm}  and Proposition \ref{FHcomm} the sequence of Proposition \ref{mainMV} is identically trivial to the left of $g_3^3$.

\item As the exact sequence of Proposition \ref{mainMV} is identically trivial to the left of $g_3^3$, the map $g_3^3$ is trivial and by exactness, $H_3^{vc}(A_n, K_q(R[-]))$ is isomorphic to the kernel of $g_2^2$. This completes the even case.

For the case $n$ odd, we claim that $g_1^3$ is an isomorphism.
Indeed, from Proposition \ref{Anordinary} and Proposition \ref{FHcomm}, the term $(\bigoplus H_2^{\mathcal{F}[H]}(\Comm_{A_n}[H],M))\oplus H_2^{Fin}(A_n,M)=\{0\}$ and in particular  $\textrm{Im}(g_2^2)=\{0\}$.
Since $g_1^3$ is injective, the claim follows.
By Proposition \ref{FinComm}, the specific description of the odd case follows.

\item There are two different arguments depending if $n$ is odd or even.

For $n$ odd, as the term  $(\bigoplus H_2^{\mathcal{F}[H]}(\Comm_{A_n}[H],M))\oplus H_2^{Fin}(A_n,M)$  is trivial, the statement is a direct consequence of the exactness of the sequence of Proposition \ref{mainMV}.

For $n$ even, we have seen in the discussion above about $g_2^2$ that this map is surjective, and hence $g_3^2$ is the trivial map.
This implies that $H_2^{vc}(A_n, K_q(R[-]))$ is the image of $g_1^2$, or equivalently, the kernel of $g_2^1$.

\item The previous description of the homomorphisms in the Mayer-Vietoris sequence proves that $g_2^0$ is a monomorphism, and hence $g_3^1$ is surjective and $H_1^{vc}(A_n,K_q( R[-]))$ the cokernel of $g_2^1$, which has been described above in terms of the summands $N_q^{[H]}$.

\item Since the Mayer-Vietoris sequence {ends} at $H_0^{vc}(A_n ,K_q( R[-]))$ this term is equal to the image of $g_3^0$ which is equal to the cokernel of $g_2^0$ that was  described above.
\end{enumerate}
\end{proof}

As said in the introduction, this theorem opens the door to concrete computations of Bredon homology of $A_n$ with respect to the family of virtually cyclic groups, provided there is available information about the $K$-theory of the coefficient ring. In particular, when $R$ is a regular ring the absence of Nil-terms and negative K-theory groups make the calculations easier and more precise.
For instance, we have
\begin{cor}\label{cor:K_0}
Let $A_n$, $n>2$ be an Artin group of dihedral type. Let $R$ be a regular ring. Then $K_0(RA_n)=K_0(R)$.
\end{cor}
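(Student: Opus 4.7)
The plan is to combine Theorem \ref{Thm:Bredon}(5) with the equivariant Atiyah--Hirzebruch spectral sequence and the Farrell--Jones conjecture for $A_n$, which holds by \cite{BFW21}. Since the assembly map is an isomorphism in this setting, the first step is to reduce the claim to identifying $H_0^{A_n}(\E A_n,\mathbf{K}(R))$ with $K_0(R)$.

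Next I would feed this into the spectral sequence $E_2^{p,q}=H_p^{vc}(A_n,K_q(R[-]))\Rightarrow H_{p+q}^{A_n}(\E A_n,\mathbf{K}(R))$ and exploit regularity of $R$. Since $R$ is regular, so is the Laurent polynomial ring $R[t,t^{-1}]=R[\Z]$; hence $K_q(R)=K_q(R[\Z])=0$ for every $q<0$ and both Nil-terms $NK_q(R)$ vanish for every $q$. Because every nontrivial virtually cyclic subgroup of the torsion-free group $A_n$ is isomorphic to $\Z$, this forces $E_2^{p,q}=0$ whenever $q<0$. Combined with the vanishing in the region $p<0$, it follows that no nontrivial differential can hit or leave the spot $(0,0)$ on any page, so $E_\infty^{0,0}=E_2^{0,0}$, and the filtration then gives $H_0^{A_n}(\E A_n,\mathbf{K}(R))\cong E_2^{0,0}$.

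Finally I would read $E_2^{0,0}=H_0^{vc}(A_n,K_0(R[-]))$ off Theorem \ref{Thm:Bredon}(5). Under regularity, $N_0^{[H]}=K_{-1}(R)\oplus NK_0(R)\oplus NK_0(R)=0$, which kills the summand $\bigoplus_{[H]}N_0^{[H]}$ in both parities; for even $n$ this already delivers $K_0(R)$. For odd $n$ the one remaining piece is $\overline{C}(K_0(R))$, and this is where the main (mild) obstacle lies: one must unwind Notation \ref{not: CKR}. Regularity collapses $K_0(R[\Z])$ to $K_0(R)$, and by the description of $\mathrm{ind}_n$ in Section \ref{Ktheory} the maps $\mathrm{ind}_2$ and $\mathrm{ind}_n$ both restrict to the identity on $K_0(R)$; hence $\tilde f$ is the diagonal $K_0(R)\to K_0(R)\oplus K_0(R)$, so $C(K_0(R))\cong K_0(R)$ and $\overline{C}(K_0(R))=0$. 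Assembling the pieces yields $K_0(RA_n)=K_0(R)$.
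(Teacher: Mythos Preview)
Your proof is correct and follows essentially the same strategy as the paper's: invoke regularity to kill the Nil-terms and negative $K$-groups, read off $H_0^{vc}(A_n,K_0(R[-]))=K_0(R)$ from Theorem~\ref{Thm:Bredon}(5), and then use the Atiyah--Hirzebruch spectral sequence together with the Farrell--Jones isomorphism to conclude. Your write-up is in fact more explicit than the paper's in two places: you argue directly that the coefficient functor $K_q(R[-])$ vanishes on the orbit category for $q<0$ (since every isotropy group is trivial or $\Z$ and $R[\Z]$ is again regular), rather than appealing back to Theorem~\ref{Thm:Bredon}; and you spell out the computation $\overline{C}(K_0(R))=0$ in the odd case by tracing through Notation~\ref{not: CKR}, which the paper leaves implicit.
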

\begin{proof}
As $R$ is regular, $K_{i}(R)$ vanishes for $i<0$ and also the Nil-Terms of the Bass-Heller-Swan decomposition.
In particular, by Theorem \ref{Thm:Bredon}, $H_0^{vc}(A_n, K_0(R[-]))= K_0(R)$.
Moreover, as $K_{i}(R)$ vanishes for $i<0$ we see  by Theorem \ref{Thm:Bredon}, that $H_j^{vc}(A_n,K_i(R[-]))=\{0\}$ for $i<0$.
Then the $E_2$-page of the Atiyah-Hirzebruch spectral sequence is concentrated in the non-negative part of the 0th, 1st, 2nd and 3rd columns, and $E_{\infty}^{0,0}=E_2^{0,0}= K_0(R)$.
\end{proof}

In particular, this implies that every finitely dominated $CW$-complex whose fundamental group is $A_n$ has the homotopy type of a finite $CW$-complex.

In the following section we compute these Bredon homology groups for different choices of the ring, including some non-regular ones.

\section{Computations of $H_i^{vc}(A_n,K_q (R[-]))$ for several coefficient rings}
\label{Sect:concrete}

In this section we use Theorem \ref{Thm:Bredon} to describe $H_i^{vc}(A_n,K_q (R[-]))$ for some instances of the ring $R$, both regular ($\mathbb{Z}, \mathbb{F}_q$) and non-regular ($\mathbb{Z}[\mathbb{Z}/{\bf 2}], \mathbb{Z}[\mathbb{Z}/{\bf 2}\times \mathbb{Z}/{\bf 2}]$ and $\mathbb{Z}[\mathbb{Z}/{\bf 4}]$).
We recall that all these groups give information about the $E^2$-term of the corresponding Atiyah spectral sequence. We point out that in the regular cases many groups can be computed, because the $K$-theory of $\Z$ is nearly known and the $K$-theory of $\mathbb{F}_q$ is known (see the corresponding examples below).
In the non-regular framework, by contrast, it is very difficult to find concrete descriptions of $K_q(R)$ for $q>1$, or of the corresponding Nil-terms.

Recall that the groups $H_i^{vc}(A_n,K_0 (R[-]))$ are trivial for $i\geq 4$ and any ring $R$.

\begin{ex}
\label{KZ}

First we compute the Bredon homology of $A_n$ with respect to the family of virtually cyclic subgroups, taking as coefficients $K_q(\Z [-])$, for $q=0,1,2$.

We need in our computations the lower algebraic $K$-theory groups of the integers. The groups $K_q(\Z)$ are known for all $q\leq 7$ and all $q\geq 8$ such that $q \not\equiv 0 \mod 4$,  see  \cite[page 2]{Wei05}.
For example,  the first values of $K_q(\Z)$ are given in the following table:
\begin{center}\begin{tabular}{c|c|c|c|c|c|c|c|c|c}
\hline
$q$ & $<0$ &0 & 1 & 2 & 3 & 4 & 5 & 6 & 7 \\
\hline
$K_q(\Z)$ & 0 & $\Z$ & $\Z/{\bf 2}$ & $\Z/{\bf 2}$ & $\Z/{\bf 48}$ & 0 & $\Z$ & 0 & $\Z/{\bf 240}$ \\
\hline
\end{tabular}
\end{center}

Observe that, as $\Z$ is regular, the Nil-terms in the Bass-Heller-Swan decomposition are trivial, and then for every $i\in \mathbb{N}$, $K_i(\Z[\Z])=K_i(\Z)\oplus K_{i-1}(\Z)$ and $N_q^{[H]}=K_{q-1}(\Z)$. Moreover, also by regularity, $K_i(\Z)=0$ if $i\leq -1$.

Taking into account of all these considerations, the previous theorem implies the following.

For $q=0$, we have:

\begin{enumerate}

\item $H_3^{vc}(A_n,K_0 (\Z[-]))\simeq H_2^{vc}(A_n,K_0 (\Z[-])) \simeq \bigoplus_{\aleph_0} \Z$.
\item {$H_1^{vc}(A_n,K_0 (\Z[-]))=0$.}
\item $H_0^{vc}(A_n,K_0 (\Z[-]))=\Z$.

\end{enumerate}

Now for $q=1$,

\begin{enumerate}

\item $H_3^{vc}(A_n,K_1 (\Z[-]))\simeq H_2^{vc}(A_n,K_1 (\Z[-])) \simeq \bigoplus_{\aleph_0} \Z /{\bf 2}$.
\item {$H_1^{vc}(A_n,K_1 (\Z[-]))=\bigoplus_{\aleph_0} \Z$}.
\item {$H_0^{vc}(A_n,K_1 (\Z[-]))=(\bigoplus_{\aleph_0} \Z)\oplus \Z /{\bf 2}$}.

\end{enumerate}

And finally, for $q=2$, all these groups are isomorphic to $\bigoplus_{\aleph_0} \Z /{\bf 2}$.

Let us briefly explain these computations.
We consider first $q=0$.
For $H_3^{vc}(A_n,K_0 (\Z[-]))$, the odd case is immediate.
In the even case, $H_3^{vc}(A_n,K_1 (\Z[-]))$ is the kernel of a homomorphism  $\bigoplus_{\aleph_0} \Z\rightarrow \Z$, and then isomorphic to $\bigoplus_{\aleph_0} \Z$.
This easy argument will be frequently used in the examples of this section without express mention.

We now deal with $H_2^{vc}(A_n,K_0 (\Z[-]))$.
We only deal with the odd case, the even one is very similar.
First observe that the kernel of $g_2^1$ is isomorphic to $\bigoplus_{\aleph_0} \Z$.
More precisely, taking account the description of Section \ref{g21} and the values of the $K_i(\Z)$, $g_2^1$ (for $n$ odd) is defined in the following way:

$$g_2^1: (\bigoplus_{[H]\neq [Z(A_n)]} \Z^2)\oplus \Z\rightarrow (\bigoplus_{[H]\neq [Z(A_n)]} \Z)\oplus \Z.$$

Here the kernel of $g_{21[H]}^1$ is isomorphic to $\Z$ for every $[H]\neq [Z(A_n)]$.
As there is an infinite number of such commensurators, we conclude that the kernel should be also infinite, and then $H_2^{vc}(A_n,K_1 (\Z[-]))=\bigoplus_{\aleph_0} \Z$.

Observe that, as $K_{-1}(\Z)=0$ and $\Z$ is regular, $N_0^{[H]}=T_i(K_0\Z)=\overline{C}(K_0\Z)=0$.
Now the values of $H_1^{vc}(A_n,K_0 (\Z[-]))$ and $H_0^{vc}(A_n,K_0 (\Z[-]))$ are easily deduced from items (4) and (5) of Theorem \ref{Thm:Bredon}.

Now take $q=1$. For $H_3^{vc}(A_n,K_1 (\Z[-]))$ and $H_2^{vc}(A_n,K_1 (\Z[-]))$ it is argued as in the previous case, taking into account that every subgroup of an $\mathbb{F}_2$-vector space is again an $\mathbb{F}_2$-vector space.
Now observe that $N_1^{[H]}=K_0(\Z)=\Z$ by the Bass-Heller-Swan decomposition, and regularity and the fact that $K_0(\Z)=\Z$ is torsion-free imply that $T_1(K_1\Z)=0$, $T_2(K_1\Z)=0$ and $\overline{C}(K_1\Z)=\Z$.
The values of $H_1^{vc}(A_n,K_1 (\Z[-]))$ and $H_0^{vc}(A_n,K_1 (\Z[-]))$ follow again from items (4) and (5) of Theorem \ref{Thm:Bredon}.

Finally, for $q=2$, the values of the homology are immediately implied by regularity and the fact that $K_2(\Z)=K_1(\Z)=\Z /{\bf 2}$.

\end{ex}


\begin{ex}

Now we will compute the groups $H_i^{vc}(A_n,K_q (\mathbb{F}_2[-]))$, for $0\leq q\leq 3$. First, the following table (see \cite{Qui73}) includes the algebraic $K$-groups that are necessary in our computations:

\begin{center}\begin{tabular}{c|c|c|c|c|c}
\hline
$q$ & $<0$ & 0 & 1 & 2 & 3 \\
\hline
$K_q(\mathbb{F}_2)$ & 0 & $\Z$ & 0 & 0 & $\Z/{\bf 3}$  \\
\hline
\end{tabular}
\end{center}

As $\mathbb{F}_2$ is regular, the same considerations about the Nil-terms and the negative $K$-groups apply also in this case. Hence, again as a direct consequence of Theorem \ref{Thm:Bredon} we have the following results. For $q=0$ and every $i\in\mathbb{N}$, $H_i^{vc}(A_n,K_0 (\mathbb{F}_2[-]))=H_i^{vc}(A_n,K_0 (\Z[-]))$, because $K_0(\Z)=K_0(\mathbb{F}_2)$ and then  $K_0 (\mathbb{F}_2[-])=K_0 (\Z[-])$ because both have the same Bass-Heller-Swan decomposition.

For $q=1$, we have $H_3^{vc}(A_n,K_1 (\mathbb{F}_2[-]))=H_2^{vc}(A_n,K_1 (\mathbb{F}_2[-]))=0$, as $K_1(\mathbb{F}_2)=0$. On the other hand, as {$N_1^{[H]}=\Z$} for every $H$, $H_1^{vc}(A_n,K_1 (\mathbb{F}_2[-]))=H_0^{vc}(A_n,K_1 (\mathbb{F}_2[-]))=\bigoplus_{\aleph_0}\Z$.

For $q=2$, the triviality of $K_2(\mathbb{F}_2)$ and $K_1(\mathbb{F}_2)$ implies the triviality of $H_i^{vc}(A_n,K_2 (\mathbb{F}_2[-]))$ for every $i$.

Finally, for $q=3$, we have $H_2^{vc}(A_n,K_3 (\mathbb{F}_2[-]))\simeq H_3^{vc}(A_n,K_3 (\mathbb{F}_2[-]))=\bigoplus_{\aleph_0}\Z /{\bf 3}$.
As the groups $N_3^{[H]}$ are  trivial and $K_2(\mathbb{F}_2)$ is so, we obtain that  $H_1^{vc}(A_n,K_3 (\mathbb{F}_2[-]))=\Z /{\bf 3}$ is trivial, and  $H_0^{vc}(A_n,K_3 (\mathbb{F}_2[-]))=\Z /{\bf 3}$.
\end{ex}

We follow with some non-regular examples, namely the rings $\Z [\Z /{\bf 2}]$, $\Z [\Z /{\bf 2}\times \Z /{\bf 2}]$ and $\Z [\Z /{\bf 4}]$, which we will respectively denote by $R_1$, $R_2$ and $R_3$.
We will compute the groups $H_i^{vc}(A_n,K_q (R_j[-]))$, for $0\leq i\leq 3$, $q=0,1$ and $1\leq j\leq 3$.
In order to do this we will need the values of their lower algebraic $K$-theory groups, as well as the Nil groups.
All these groups are displayed in the following table:

\begin{center}\begin{tabular}{c|c|c|c|c|c}
\hline
& $K_1$ & $K_0$ & $K_{-1}$ & $NK_0$ & $NK_1$ \\
\hline
$\Z [\Z /{\bf 2}]$  & $(\Z /{\bf 2})^2$ & $\Z$ & 0 & 0 & 0 \\
\hline
$\Z [\Z /{\bf 2}\times \Z /{\bf 2}]$  & $(\Z /{\bf 2})^3$ & $\Z$ & $\Z^r$ & $\bigoplus_{\aleph_0} \Z/{\bf 2}$ & $\bigoplus_{\aleph_0} \Z/{\bf 2} $\\
\hline
$\Z [\Z /{\bf 4}]$  & $\Z /{\bf 2}\times \Z /{\bf 4}$ & $\Z$ & $\Z^s$ & $\bigoplus_{\aleph_0} \Z/{\bf 2}$&$ \bigoplus_{\aleph_0} \Z/{\bf 2}$ \\
\hline
\end{tabular}
\end{center}

Let us briefly explain the values of the table.  By work of Oliver \cite[Theorem 14.1-2]{Oli88}, the kernels $SK_1(R_j)$ of the determinant maps are trivial, and hence (see for example \cite{Ste78}) $K_1(R_j)$ is the group of units of $R_j$. Now the values of $K_1$ follow from a theorem of Higman (\cite{Hig40}, see also \cite[II.4.1]{She78}). In turn, by \cite[Proposition 6]{Cas73}, the reduced $K_0$ of these three rings is trivial, and then \cite[Lemma 2.18]{Luc21} implies that $K_0(R_j)=\Z$ for every $j$. The values of the third column follow from work of Carter \cite[Theorem 1]{Car80}, being the figures $r$ and $s$ positive integers that depend on the Schur indexes. Finally, the Nil-terms of the two last columns were computed by Weibel in \cite{Wei09}.

We are now ready to describe the homology of the dihedral Artin groups with respect to the family of virtually cyclic groups, referred to the $K$-theory $K_q$ of these group rings, $q=0,1$. As before, our main tool is Theorem \ref{Thm:Bredon}.

\begin{ex}
We start with $R_1=\Z [\Z /{\bf 2}]$.
When $q=0$, $H^{vc}_3(A_n,K_0(R_1[-]))=H^{vc}_2(A_n,K_0(R_1[-]))=\bigoplus_{\aleph_0} \Z $.
Taking into account that $N_0^{[H]}=0$ for every $H$ (because $K_{-1}(R_1)$ is trivial, and also the Nil-terms), we obtain that $H^{vc}_1(A_n,K_0(R_1[-]))=0$.
On the other hand, $H^{vc}_0(A_n,K_0(R_1[-]))=\Z$ too.

If $q=1$, $H^{vc}_3(A_n,K_1(R_1[-]))=H^{vc}_2(A_n,K_1(R_1[-]))=\bigoplus_{\aleph_0} \Z /{\bf 2} $.
Moreover, as $\Z$ is torsion-free and then the correspondent $T_1(K_1(R_1))$ is trivial, we have $H^{vc}_1(A_n,K_1(R_1[-]))=\bigoplus_{\aleph_0}\Z$.
Finally, taking into account that $C(K_1(R_1))=\Z$ in this case, $H^{vc}_0(A_n,K_1(R_1[-]))=\Z /{\bf 2}\oplus \Z /{\bf 2}\oplus(\bigoplus_{\aleph_0} \Z)$.

\end{ex}

\begin{ex}
We continue by considering $R_2=\Z [\Z /{\bf 2}\times \Z /{\bf 2}]$.
Now $H^{vc}_3(A_n,K_0(R_2[-]))=H^{vc}_2(A_n,K_0(R_2[-]))=\bigoplus_{\aleph_0} \Z $, exactly as in the previous example. We have $N_0^{[H]}=K_{-1}(R_2)\oplus NK_0(R_2)\oplus NK_0(R_2)=\Z^r\oplus (\bigoplus_{\aleph_0}\Z /{\bf 2})$ and then $H^{vc}_1(A_n,K_0(R_2[-]))=(\bigoplus_{\aleph_0} \Z)\oplus (\bigoplus_{\aleph_0} \Z /{\bf 2})$.
 Observe that the extra term $T_2(K_0(R_2))$ is an $\mathbb{F}_2$-vector space of at most countable dimension, and then it is included in the previous direct sum.
 Finally, $H^{vc}_0(A_n,K_0(R_2[-]))=(\bigoplus_{\aleph_0} \Z)\oplus (\bigoplus_{\aleph_0} \Z /{\bf 2})$ as in the previous case, taking into account that $\overline{C}(K_0(R_2))$ is a direct sum of free abelian groups and an $\mathbb{F}_2$-vector space, both of at most countable dimension.

When $q=1$, $H^{vc}_3(A_n,K_1(R_2[-]))=H^{vc}_2(A_n,K_0(R_2[-]))=\bigoplus_{\aleph_0} \Z /{\bf 2}$, because $K_1(R_2)=(\Z /{\bf 2})^3$.
Now $N_q^{[H]}=\Z\oplus \bigoplus_{\aleph_0}\Z /{\bf 2}$, and then by similar reasons to the previous case, $H^{vc}_1(A_n,K_1(R_2[-]))=(\bigoplus_{\aleph_0} \Z)\oplus (\bigoplus_{\aleph_0} \Z /{\bf 2})$.
Similarly $H^{vc}_0(A_n,K_1(R_2[-]))=(\bigoplus_{\aleph_0} \Z)\oplus (\bigoplus_{\aleph_0} \Z /{\bf 2})$.

\end{ex}

\begin{ex}
To conclude, we consider $R_2=\Z [\Z /{\bf 4}]$.
As the groups $K_0$, $NK_0$ and $NK_1$ are all isomorphic as abelian groups to their counterparts in the previous example
and $K_{-1}$ is also free abelian, $H^{vc}_3(A_n,K_0(R_3[-]))=H^{vc}_2(A_n,K_0(R_3[-]))=\bigoplus_{\aleph_0} \Z $, $H^{vc}_1(A_n,K_0(R_3[-]))=(\bigoplus_{\aleph_0} \Z)\oplus (\bigoplus_{\aleph_0} \Z /{\bf 2})$, and $H^{vc}_0(A_n,K_0(R_3[-]))=(\bigoplus_{\aleph_0} \Z)\oplus (\bigoplus_{\aleph_0} \Z /{\bf 2})$.

For $q=1$, it is clear that
$H^{vc}_3(A_n,K_1(R_3[-]))=(\bigoplus_{\aleph_0} \Z /{\bf 2})\oplus (\bigoplus_{\aleph_0} \Z /{\bf 4})$.
Now, the analysis of $g_2^1$ in Section \ref{g21} guarantees that $H^{vc}_2(A_n,K_1(R_3[-]))$ should contain a subgroup isomorphic to $(\bigoplus_{\aleph_0} \Z /{\bf 2})\oplus (\bigoplus_{\aleph_0} \Z /{\bf 4})$, and then be isomorphic to it, as the source of $g_2^1$ is also isomorphic to $(\bigoplus_{\aleph_0} \Z /{\bf 2})\oplus (\bigoplus_{\aleph_0} \Z /{\bf 4})$.
We have $N_1^{[H]}=K_{0}(R_3)\oplus NK_0(R_2)\oplus NK_0(R_2)=\Z\oplus (\bigoplus_{\aleph_0}\Z /{\bf 2})$.
By an analogous reasoning to the case of $R_2$, $H^{vc}_1(A_n,K_1(R_3[-]))=(\bigoplus_{\aleph_0} \Z)\oplus (\bigoplus_{\aleph_0} \Z /{\bf 2})$.
Finally, $H^{vc}_0(A_n,K_1(R_3[-]))=(\bigoplus_{\aleph_0} \Z)\oplus (\bigoplus_{\aleph_0} \Z /{\bf 2})\oplus \Z /{\bf 4}$, corresponding the extra term to $K_1(R_3)$.

\end{ex}

\begin{rem}
Observe that the proposed method permits computations of the Bredon homology (with respect to the family of virtually cyclic groups) of $A_n$ with respect to the $K$-theory of any ring for which there is some knowledge of the algebraic $K$-groups.
We remark that the computation of the lower algebraic $K$-theory groups is a hot topic nowadays (see for example \cite{LaOr07}, \cite{HiJu2021} or \cite{GJM18}), so it seems possible that good knowledge about the $E^2$-term of the Atiyah-Hirzebruch spectral sequence is achieved in these cases, even for more general families of Artin groups.

\end{rem}



When the ring $R$ is regular, it is possible to take a shortcut in order to compute the left-hand side of the Farrell-Jones conjecture. Observe that Theorem 0.1 in \cite{LS16}, that establishes a splitting $$H^G(\E G,\mathbf{K} (R))\simeq H^G(\underline{E} G,\mathbf{K} (R))\oplus H^G(\E G, \underline{E} G, \mathbf{K} (R)),$$ for every group $G$ and ring $R$.
Now if $R$ is regular and $G$ is torsion-free, the second term of the direct sum vanishes \cite[Proposition 2.6]{LR05}. Likewise, when $G$ is torsion-free $\underline{E}  G=EG$, the classical universal space for principal $G$-bundles, and then $H^G(\underline{E}  G,\mathbf{K}(R))=H(BG, \mathbf{K}(R))$, being the latter ordinary homology. Hence, in the case $G=A_n$, it is enough to compute $H^(BA_n, \mathbf{K}(R))$ to obtain the left-hand side of the Farrell-Jones conjecture for these groups. When the coefficients $\mathbf{K}(R)$ are known, and taking into account that the Artin groups of dihedral type are one-relator, Lemma 16.21 in \cite{Luc21} provides an accurate description of these homology groups, corresponding case i) of the lemma to $n$ even and case ii) to $n$ odd. For example, when $K=\mathbb{F}_p$ for $p$ prime, the classical results of \cite{Qui73} provide a complete knowledge of the groups $H^(BA_n, \mathbf{K}(R))$ , while for $K=\mathbb{Z}$ much information is available (\cite[page 2]{Wei05}). When $R$ is not regular, however, this strategy does not work, as in this case groups in the left-hand side of Farrell-Jones. In this context, we expect that our results on Bredon homology of $A_n$ (znd in particular the last examples of this section) can bring some light over the groups $H^{A_n}(\E A_n,\mathbf{K} (R))$, via appropriate computations in the equivariant Atiyah-Hirzebruch spectral sequence. Observe that, according to Theorem \ref{Thm:Bredon}, this spectral sequence has four columns. This fact indicates that the sequence should collapse at most in the page $E_4$, and hence this page should provide the knowledge of Farrell-Jones groups. The analysis of the differentials, as well as the subsequent description of the $E_3$ and $E_4$-pages, seems a difficult and interesting future line of research.

\noindent{\textbf{{Acknowledgments.}}}

We warmly thank D. Juan-Pineda, W. L\"{u}ck, N. Petrosyan, L.J. S\'anchez-Salda\~{n}a, V. Srinivasan and J. Stienstra and an anonymous referee for their useful comments.



\begin{thebibliography}{1}


\bibitem{AJT18}
J. Aramayona, D. Juan-Pineda and A. Trujillo-Negrete,
{\it On the virtually cyclic dimension of mapping class groups of punctured spheres},
Algebr. Geom. Topol. {\bf 18} (2018), no. 4, 2471–-2495.

\bibitem{ArMa}
J. Aramayona and C. Mart\'{i}nez-P\'erez,
{\it The proper geometric dimension of the mapping class group},
Algebr. Geom. Topol. {\bf 14} (2014), no. 1, 217-–227.

\bibitem{Arz}
G.N. Arzhantseva,
{\it On quasiconvex subgroups of word hyperbolic groups,}
 Geom. Dedicata, 87 (2001), 191--208.

\bibitem{ABGRW}
S Azzali, S.L. Browne, M.P. Gomez-Aparicio, L.C. Ruth and H. Wang,
{\em K-homology and K-theory of pure braid groups},
arXiv:2105.07848

\bibitem{BJV14}
N. B\'arcenas, D. Juan-Pineda and M. Vel\'asquez,
{\em Bredon cohomology, K-theory and K-homology of pullbacks of groups},
arXiv:1311.3138.

\bibitem{Bas68}
H. Bass,
{\em Algebraic K-theory},
W. A. Benjamin, Inc., New York-Amsterdam, 1968.

\bibitem{BFW21}
M. Bestvina. K. Fujiwara and D. Wigglesworth,
{\em Farrell-Jones conjecture for free-by-cyclic groups,}
arXiv:1906.00069.pdf

\bibitem{Bre67}
G.E. Bredon, 
{\em Equivariant cohomology theories},
Lecture Notes in Mathematics, {\bf 34}. Springer-Verlag, Berlin-New York, 1967.


\bibitem{BuSa16}
M. Bustamante and L.J. S\'anchez-Salda\~{n}a,
{\em On the algebraic {K}-theory of the Hilbert modular group},
Algebr. Geom. Topol. {\bf 16} (2016), 2107-–2125.

\bibitem{Car80}
D.W. Carter,
{\em Lower K-theory of finite groups},
Comm. Algebra {\bf 8} (1980), 1927-–1937.


\bibitem{Cas73}
P. Cassou-Nogu\`{e}s,
{\em Classes d’id\'{e}aux de l’alg\`{e}bre d’un groupe ab\'{e}lien},
 C. R. Acad.
Sci. Paris Ser. A-B, 276:A973–A975, 1973.


\bibitem{CMW04}
R. Charney, J. Meier and K. Whitlessey,
{\it Bestvina's normal form complex and the homology of Garside groups},
Geom. Dedicata {\bf 105} (2004), 171-–188.

\bibitem{CHR20}
L. Ciobanu, D. Holt and S. Rees,
{\it Equations in groups that are virtually direct products}
J. Algebra {\bf 545} (2020), 88--99.

\bibitem{CE09}
M. Clancy and G. Ellis,
{\it Homology of some Artin and twisted Artin groups},
J. K-Theory {\bf 6} (2009), 171--196.

\bibitem{DQR11}
J.F. Davis, F. Quinn and H. Reich,
\newblock{\em Algebraic {}K-theory over the infinite dihedral group: a controlled topology approach},
\newblock  J. Topol. {\bf 4} (2011), 505-–528.

\bibitem{Deg16}
D. Degrijse,
\newblock{\em A cohomological characterization of locally virtually cyclic groups},
\newblock  Adv. Math. {\bf 305} (2017), 935--952.

\bibitem{DP}
D. Degrijse and N. Petrosyan,
\newblock{\em Geometric dimension of groups for the family of virtually cyclic subgroups},
\newblock  J. Topol. {\bf 7} (2014), 697--726.


\bibitem{TDieck}
T. tom Dieck,
\newblock{\em Transformation groups}
\newblock Vol. 8. Walter de Gruyter, 1987.

\bibitem{Far77}
T. Farrell,
{\em The nonfiniteness of Nil},
Proc. Amer. Math. Soc. {\bf 65} (1977), 215–-216.

\bibitem{FaJo93}
T. Farrell and L. Jones,
{\it Isomorphism conjectures in algebraic K-theory},
\newblock J. Amer. Math. Soc., {\bf 6} (1993), 249–-297.

\bibitem{FlGo18}
R. Flores and J. Gonz\'alez-Meneses,
{\it Classifying spaces for the family of virtually cyclic subgroups of braid groups},
Int. Math. Res. Notices. {\bf 2020} (2020), 1575--1600.

\bibitem{GJM18}
J. Guaschi, D. Juan-Pineda and S. Mill\'{a}n-L\'{o}pez,
{\em The lower algebraic K-theory of virtually cyclic subgroups of the braid groups of the sphere and of $\mathbb{Z}[B_4(S^2)]$},
Springer Briefs in Mathematics. Springer, Cham, 2018.

\bibitem{HaLu12}
I. Hambleton and W. L\"{u}ck,
{\em Induction and computation of Bass Nil groups for finite groups},
Pure Appl. Math. Q. {\bf 8} (2012), 199--219.

\bibitem{Hat02}
A. Hatcher,
\newblock{\em Algebraic topology},
\newblock Cambridge University Press, 2002.

\bibitem{HiJu2021}
C. Hidber and D. Juan-Pineda,
{\it The algebraic K-theory of the group ring of the Klein bottle group},
Topol. Appl. {\bf 293} (2021), 8 pages.

\bibitem{Hig40}
G. Higman,
{\em The units of group rings},
Proc. London Math. Soc. {\bf 46} (1940), 231--248.


\bibitem{KLL21}
D. Kasprowski, K. Li and W. L\"{u}ck,
\newblock{\em {K} and {L}-theory of graph products of groups},
\newblock Groups Geom. Dyn. {\bf 15} (2021), 269--311.

\bibitem{LaOr07}
J. Lafont and I. Ortiz,
\newblock{\em Relative hyperbolicity, classifying spaces, and lower algebraic K-theory},
\newblock Topology {\bf 46} (2007), 527–553.

\bibitem{Luc05}
W. L\"{u}ck,
{\it Survey on classifying spaces for families of subgroups},
in \emph{Infinite groups: geometric, combinatorial and dynamical aspects}, 269–-322, Progr. Math., 248, Birkhäuser, Basel, 2005.

\bibitem{Luc21}
W. L\"{u}ck,
{\it Isomorphism conjectures in $K$ and $L$-theory},
in preparation, https://www.him.uni-bonn.de/lueck/data/ic.pdf

\bibitem{LR05}
W. L\"{u}ck and H. Reich,
\newblock{ \em The Baum-Connes and the Farrell-Jones conjectures in K- and L-theory},
\newblock Handbook of K-theory, volume 2, pp. 703--842, Springer, 2005.

\bibitem{LuRo14}
W. L\"{u}ck and D. Rosenthal,
\newblock {\em On the K- and L-theory of hyperbolic and virtually finitely generated abelian groups},
\newblock Forum Math. {\bf 26} (2014), no. 5, 1565–-1609.

\bibitem{LS16}
W. L\"{u}ck and W. Steimle,
\newblock{ \em Splitting the relative assembly map, Nil-terms and involutions},
\newblock Ann. K-theory {\bf 1} (2016) 339--377.

\bibitem{LW}
W. L\"{u}ck and M. Weiermann,
\newblock {\em On the classifying space of the family of virtually cyclic subgroups},
\newblock Pure Appl. Math. Q. {\bf 8} 2 (2012), 497--555.

\bibitem{Lyndon}
R. Lyndon,
\newblock {\em Cohomology theory of groups with a single defining relation},
\newblock Ann. of Math. {\bf 52} (1950), 650--665.

\bibitem{Mac96}
D. Macpherson,
{\it Permutation groups whose subgroups have just finitely many orbits},
in \emph{Ordered groups and infinite permutation groups}, pp. 221--230, Kluwer Academic Publishers, 1996.

\bibitem{MaMa06}
J. Mairesse and F. Math\'{e}us,
{\em Growth series for Artin groups of dihedral type}
Internat. J. Algebra Comput. {\bf 16} (2006), 1087--1107.

\bibitem{Mar02}
C. Mart\'{i}nez-P\'{e}rez,
\newblock {\em  A spectral sequence in Bredon (co)homology},
\newblock J. Pure Appl. Algebra {\bf 176} (2002), 161--173.

\bibitem{MiVa03}
G. Mislin and A. Valette,
{\it Proper group actions and the Baum-Connes conjecture},
\emph{Advanced courses in Mathematics}, no. 55, CRM Barcelona, 2003.


\bibitem{NuPe18}
B.E.A. Nucinkis and N. Petrosyan,
{\em Hierarchically cocompact classifying spaces for mapping class groups of surfaces},
Bull. Lond. Math. Soc. {\bf 50} (2018), 569--582.



\bibitem{Oli88}
R. Oliver,
{\em Whitehead groups of finite groups}.
Cambridge University Press, Cambridge, 1988.

\bibitem{Par09}
L. Paris,
{\it Braid groups and Artin groups},
Handbook of Teichmüller theory. Vol. II, pp. 389–-451, IRMA Lect. Math. Theor. Phys., 13, Eur. Math. Soc., Zürich, 2009.

\bibitem{Qui73}
D. Quillen,
{\it Finite generation of the groups Ki of rings of algebraic integers},
in \emph{Algebraic K-theory, I: Higher K-theories} (Proc. Conf., Battelle Memorial Inst., Seattle,
Wash., 1972), pages 179–198. Lecture Notes in Math., vol. 341. Springer-Verlag,
Berlin, 1973.

\bibitem{San08}
R. S\'anchez-Garc\'{i}a,
{\it Bredon homology and equivariant K-homology of $SL(3,Z)$},
J. Pure Appl. Algebra, 212 (2008), 1046--1059.

\bibitem{SaVe18}
L.J. S\'anchez-Salda\~{n}a and M. Vel\'asquez,
\newblock{ \em The algebraic and topological $K$-theory of the Hilbert modular group},
\newblock Homology Homotopy Appl. {\bf 20} (2018), no. 2, 377–-402.

\bibitem{She78}
S.K. Shehgal,
{\em Topics in group rings},
Dekkar, New York, 1978.

\bibitem{Sri91}
V. Srinivas,
\newblock {\em Algebraic K-theory,}
\newblock Birkh\"{a}user Boston Inc., Boston, MA, 1991.

\bibitem{Ste78}
M.R. Stein,
{\em Whitehead groups of finite groups},
Bull. Amer. Math. Soc. {\bf 84} (1978), 201--212.

\bibitem{Sti82}
J. Stienstra,
{\em  Operations in the higher K-theory of endomorphisms},
Current trends in alge braic topology, Part 1 (London, Ont., 1981), CMS Conf. Proc., {\bf 2}, Amer. Math. Soc., Providence, R.I., 1982, 59-–115.

\bibitem{Wal09}
C. Walsh,
{\it Busemann points of Artin groups of dihedral type},
Internat. J. Algebra Comput. {\bf 19} (2009), 891--910.

\bibitem{Wei81}
C. Weibel,
{\em Mayer-Vietoris sequences and module structures on NK},
Algebraic K-theory, Evanston 1980 (Proc. Conf., Northwestern Univ., Evanston, Ill., 1980), Lecture Notes in Math., {\bf 854}, Springer, Berlin, 1981, 466–-493.

\bibitem{Wei05}
C. Weibel,
\newblock {\em Algebraic K-theory of rings of integers in local and global fields},
\newblock Handbook of K-theory, volume 2, pp. 339--390, Springer, 2005.

\bibitem{Wei09}
C. Weibel,
{\em $NK_0$ and $NK_1$ of the groups $C_4$ and $D_4$,}
Comment. Math. Helv. {\bf 84} (2009), 339-–349.

\bibitem{Wei13}
C. Weibel,
{\em The K-book: an introduction to algebraic K-theory},
Graduate Studies in Mathematics, {\bf 145} (2013), American Mathematical Society.

\end{thebibliography}
\end{document}